%%%%%%%%%%%%%%%%%%%%%%%%%%%%%%%%%%%%%%%%%%%%%%%%%%%%%%%%%%%% 
%                                                          %   
% Title : Computing the Cassels-Tate pairing on 3-isogeny  %
%         Selmer groups via cubic norm equations           %
%                                                          %
% Authors : Monique van Beek and Tom Fisher                %
% Version : November 2017                                  % 
%                                                          %
%%%%%%%%%%%%%%%%%%%%%%%%%%%%%%%%%%%%%%%%%%%%%%%%%%%%%%%%%%%%

\documentclass[12pt]{amsart}
\usepackage{amssymb,latexsym,comment,url}
% ,mathtools,mathrsfs,tabularx,rotating,chngpage}
\usepackage[all]{xy}

\newfont{\wncyr}{wncyr10 at 12pt}
\newfont{\wncyrten}{wncyr10 at 10pt}

\newcommand{\Br}{\operatorname{Br}}
\newcommand{\Gal}{{\operatorname{Gal}}}
\newcommand{\GL}{{\operatorname{GL}}}

\newcommand{\Hom}{{\operatorname{Hom}}}
\newcommand{\CT}{{\operatorname{CT}}}

\newcommand{\Ind}{{\operatorname{Ind}}}
\newcommand{\End}{{\operatorname{End}}}
\newcommand{\SL}{{\operatorname{SL}}}
\newcommand{\Mat}{{\operatorname{Mat}}}
\newcommand{\im}{\operatorname{Im}}
\newcommand{\inv}{\operatorname{inv}}
\newcommand{\cyc}{{\operatorname{cyc}}}
\newcommand{\loc}{\operatorname{loc}}
\newcommand{\rank}{\operatorname{rank}}
\newcommand{\F}{{\mathbb F}}
\newcommand{\phihat}{{\widehat{\phi}}}
\newcommand{\psihat}{{\widehat{\psi}}}
\newcommand{\Kbar}{{\overline K}}
\newcommand{\PP}{{\mathbb P}}
\newcommand{\R}{{\mathbb R}}
 
\newcommand{\cor}{{\textup{Cor}}}
\newcommand{\ra}{\longrightarrow}
\newcommand{\res}{{\textup{Res}}}
\newcommand{\Q}{{\mathbb Q}}
\newcommand{\Z}{{\mathbb Z}}
\newcommand{\OK}{{{\mathcal O}_K}}
\newcommand{\pp}{{\mathfrak p}}
\newcommand{\bb}{{\mathfrak b}}
\newcommand{\isom}{\cong}
\newcommand{\Sha}{\mbox{\wncyr Sh}}
\newcommand\numberthis{\addtocounter{equation}{1}\tag{\theequation}}

% \newenvironment{Proof}{\par\noindent{\sc Proof:}}%
%                       {\hspace*{\fill}\nobreak$\Box$\par\medskip}
% \newenvironment{ProofOf}[1]{\par\noindent{\em Proof of #1.}}%
%                         {\hspace*{\fill}\nobreak$\Box$\par\medskip}

% \newcounter{nootje}
% \setcounter{nootje}{1}
% \renewcommand\check[1]{[*\thenootje]\marginpar{\tiny\begin{minipage}{20mm}\begin{flushleft}\thenootje : #1\end{flushleft}\end{minipage}}\addtocounter{nootje}{1}}

\newtheorem{Proposition}{Proposition}[section]
\newtheorem{Theorem}[Proposition]{Theorem}
\newtheorem{Lemma}[Proposition]{Lemma}

\theoremstyle{definition}
\newtheorem{Definition}[Proposition]{Definition}
\newtheorem{Remark}[Proposition]{Remark}
\newtheorem{Example}[Proposition]{Example}
\newtheorem{Algorithm}[Proposition]{Algorithm}

\addtolength{\hoffset}{-1cm}
\addtolength{\textwidth}{2cm}

\setlength{\marginparsep}{2mm}
\setlength{\marginparwidth}{20mm}

\begin{document}

\title[Computing the Cassels-Tate pairing]%
  {Computing the Cassels-Tate pairing on $3$-isogeny 
   Selmer groups via cubic norm equations}

\author{Monique van Beek}
\address{Canons Village 1, 46, Global edu-ro 145beon-gil, Daejeong-eup, Seogwipo-si, Jeju-do, 63644, Rep. of Korea}
\email{moniquevanbeek@gmail.com}

\author{Tom~Fisher}
\address{University of Cambridge,
         DPMMS, Centre for Mathematical Sciences,
         Wilberforce Road, Cambridge CB3 0WB, UK}
\email{T.A.Fisher@dpmms.cam.ac.uk}

\keywords{elliptic curves, Cassels-Tate pairing, descent, norm equations}
\subjclass[2010]{11G05, 11Y40}
\date{7th November 2017}  

\begin{abstract}
  We explain a method for computing the Cassels-Tate pairing on the
  $3$-isogeny Selmer groups of an elliptic curve. This improves the
  upper bound on the rank of the elliptic curve coming from a descent
  by $3$-isogeny, to that coming from a full $3$-descent. One
  ingredient of our work is a new algorithm for solving cubic norm
  equations, that avoids the need for any $S$-unit computations. As an
  application, we show that the elliptic curves with torsion subgroup
  of order $3$ and rank at least $13$, found by Eroshkin, have rank
  exactly $13$.
\end{abstract}

\maketitle

\renewcommand{\baselinestretch}{1.1}
\renewcommand{\arraystretch}{1.3}
\renewcommand{\theenumi}{\roman{enumi}}

\section*{Introduction}

Let $E$ be an elliptic curve over a number field $K$. By the
Mordell-Weil theorem, the rational points $E(K)$ form a finitely
generated abelian group. The number of points needed to generate the
non-torsion part of $E(K)$ is called the rank. Determining the rank is
a non-trivial problem, and indeed there is no known algorithm that
will compute it in all cases.

We may however bound the rank by following the proof of the
Mordell-Weil theorem. For each integer $n \ge 2$, the $n$-Selmer group
$S^{(n)}(E/K)$ classifies the $n$-coverings of $E$ that have points
everywhere locally. This group is finite and effectively computable.
Since $E(K)/n E(K)$ injects into $S^{(n)}(E/K)$, computing the
$n$-Selmer group gives an upper bound for the rank. This process is
known as (full) $n$-descent. In view of the short exact sequence
\begin{equation}
\label{exseq:ndesc}
 0 \ra E(K)/nE(K) \ra S^{(n)}(E/K) \ra \Sha(E/K)[n] \ra 0 
\end{equation}
the rank bound coming from $n$-descent may be improved whenever the
Tate-Shafarevich group $\Sha(E/K)$ contains non-trivial $n$-torsion.

Cassels~\cite{CasIV} defined an alternating pairing (now known as the
Cassels-Tate pairing) \[ \Sha(E/K) \times \Sha(E/K) \ra \Q/\Z \] with
the property that $\Sha(E/K)[n]$ and $n \Sha(E/K)$ are exact
annihilators. One consequence is that if $\Sha(E/K)$ is finite (as
conjectured by Tate and Shafarevich) then its order is a square.
Another consequence is that we can sometimes use the pairing to detect
non-trivial elements of $\Sha(E/K)$. Specifically, computing the
pairing on $S^{(n)}(E/K)$ improves the rank bound coming from
$n$-descent, to that coming from $n^2$-descent. Thus, for example,
Cassels used the pairing in \cite{cassec} to turn a $2$-descent into a
$4$-descent, and to some extent this has been generalised in
\cite{swin}, \cite{rachel}, \cite{Don}.

Descent calculations become very much more tractable in the case that
our elliptic curve admits a rational $p$-isogeny for some prime $p$.
This is the situation we consider in this paper. We write $\phi : E
\to E'$ for the $p$-isogeny, and $\phihat : E' \to E$ for its dual.
Since $\phihat \circ \phi$ is multiplication-by-$p$, there is an exact
sequence
\begin{align*}
 0   \ra & E(K)[\phi] \ra E(K)[p] \stackrel{\phi}{\ra} E'(K)[\phihat] \\
    & \ra E'(K)/ \phi E(K) \stackrel{\phihat}{\ra} E(K)/p E(K)
   \ra E(K) / \phihat E'(K) \ra 0,
\end{align*}
from which we deduce that
\[ p^{\rank E(K)} = \frac{ | E(K)/pE(K) | }{ |E(K)[p]| } = \frac{ |
  E'(K)/ \phi E(K) | \cdot | E(K)/ \phihat E'(K) | } { |E(K)[\phi] |
  \cdot |E'(K)[\phihat]| }.  \] 
Analogous to \eqref{exseq:ndesc} there are exact sequences
\[ 0 \ra E'(K)/\phi E(K)
     \ra S^{(\phi)}(E/K) \ra \Sha(E/K)[\phi_*] \ra 0, \]
and 
\[ 0 \ra E(K)/\phihat E'(K) \ra S^{(\phihat)}(E'/K) \ra
\Sha(E'/K)[\phihat_*] \ra 0. \] 
Computing the Selmer groups $S^{(\phi)}(E/K)$ and
$S^{(\phihat)}(E'/K)$ gives an upper bound for the rank. This process
is known as descent by $p$-isogeny, and is described for example in
\cite{top, delong, 5descent, ctpps, ss, flynngrattoni, millerstoll}.

There is a commutative diagram with exact rows
\[ \xymatrix{ E'(K)[\phihat] \ar@{=}[d] \ar[r] & E'(K)/ \phi E(K)
   \ar@{^{(}->}[d] \ar[r] &
  E(K)/p E(K)  \ar@{^{(}->}[d] \ar[r]  & E(K) / \phihat E'(K)  \ar@{^{(}->}[d] \\
  E'(K)[\phihat] \ar[r] & S^{(\phi)}(E/K) \ar[r] & S^{(p)}(E/K) \ar[r]
  & S^{(\phihat)}(E'/K) } \] 
where the final map in the second row need not be surjective. Instead
its image is the kernel of the Cassels-Tate pairing
\begin{equation}
\label{CTP}
 \langle~,~\rangle_\CT : S^{(\phihat)}(E'/K) \times S^{(\phihat)}(E'/K)
\to \Q/\Z.
\end{equation}
This pairing is the lift of the one on $\Sha(E'/K)[\phihat_*]$.
Computing the pairing~\eqref{CTP} allows us to turn a descent by
$p$-isogeny into a full $p$-descent.  If the pairing is non-zero then
this improves our upper bound for the rank.

The case $p=2$ is treated in \cite{higherdescent}, so from now on we
take $p$ an odd prime. In the first of his series of papers on
elliptic curves, Cassels \cite{CasI} showed how to compute the
pairing~\eqref{CTP} when $p=3$ and $E'$ takes the form $x^3 + y^3 =
k$.  The case where $p = 3$ or $5$ and $E[p] \isom \mu_p \times
\Z/p\Z$ was treated in \cite{ctpps}.

We describe a method for computing the pairing~\eqref{CTP} in the
cases where $E[\phi]$ is isomorphic (as a Galois module) to either
$\mu_p$ or $\Z/p\Z$. In both cases the global part of our method
requires us to solve a norm equation $N_{L/K}(\xi) = a$ where $L/K$ is
a field extension of degree $p$. Moreover, when $p=3$ and $E[\phi]
\isom \Z/3\Z$, we have $L = K(\sqrt[3]{b})$ for some $b \in K$. In the
case $K = \Q$ we give an algorithm for solving such norm equations,
that avoids the need for any $S$-unit computations.  This enables us
to apply our methods to elliptic curves with large discriminant.

One particular computational challenge is to find elliptic curves over
$\Q$ of large rank with a given torsion subgroup. The current records
are listed on Dujella's website \cite{dujweb}.  Between 2007 and 2009,
Y.G. Eroshkin found the following five elliptic curves with torsion
subgroup $\Z/3\Z$ and rank at least 13.
\begin{align*}
y^2 + 10154960719 x y - 66798078951809458114391930400 y &= x^3 &&17 \\
y^2 + 8412073331 x y + 7384158420201525518270114400 y &= x^3 && 13 \\
y^2 + 19223749711 x y - 435665346791890005577936749600 y &= x^3 && 13\\
y^2 + 8589423667 x y - 30679410326232604531989794400 y &= x^3 && 17\\
y^2 + 35429815349 x y - 169064164426703584254124708800 y &= x^3 && 15 
\end{align*}
The number on the right is the upper bound for the rank obtained by
descent by $3$-isogeny. By computing the Cassels-Tate pairing we were
able to verify that each of these curves has rank exactly $13$. This
is the largest known rank for an elliptic curve with torsion subgroup
$\Z/3\Z$.

We have also used the methods of this paper to find new examples of
elliptic curves with torsion subgroup $\Z/9\Z$ and ranks 3 and 4 (see
\cite{phdMonique, dujweb}). In contrast, when we searched for elliptic
curves with torsion subgroup $\Z/12\Z$ and rank~$4$ we could not find
any examples beyond the one already known. For both of these torsion
subgroups the largest known rank is $4$.

In Section~\ref{sec1} we recall the definition of the Cassels-Tate
pairing that is relevant to our work. In Section~\ref{sec2} we give an
explicit description of the long exact sequence
\begin{equation}
\label{H1seq}
 H^1(K,E[\phi]) \to H^1(K,E[p]) \to H^1(K,E'[\phihat]) \to H^2(K,E[\phi]) 
\end{equation}
in terms of \'etale algebras, and explain how lifting an element of
$H^1(K,E'[\phihat])$ to $H^1(K,E[p])$ comes down to solving a norm
equation. As indicated above, we concentrate on the cases where
$E[\phi] \isom \mu_p$ or $E[\phi] \isom \Z/p\Z$. Since it is always
possible to reduce to one of these two cases by making a field
extension of degree coprime to $p$, this is perhaps not such a severe
restriction.

In Section~\ref{sec3} we present our new algorithm for solving norm
equations for pure cubic extensions $\Q(\sqrt[3]{b})/\Q$.  It is based
on the Legendre-type method for solving conics
in~\cite{ratconics}. When applied to suitably large examples, our
algorithm performs much better than the standard approach using
$S$-units, as described for example in
\cite[Section~7.5]{cohenadvanced}, \cite{simonnorm}.

In Section~\ref{sec4} we give three examples computing the
pairing~\eqref{CTP} in the case $K = \Q$ with $E[\phi] \isom \mu_3$ or
$E[\phi] \isom \Z/3\Z$. The first two examples are small, and so do
not require any special methods to solve the norm equations.
In the third example, where we consider one of 
Eroshkin's curves with torsion subgroup $\Z/3\Z$, % and here 
we are entirely reliant on the methods in Section~\ref{sec3}.

It is interesting to remark that if we solve norm equations by the
method in Section~\ref{sec3}, then the simplest case is when $E[ \phi]
\isom \Z/3\Z$.  However, if we solve norm equations by trivialising
the corresponding cyclic algebra (using the method in
\cite{algorithms}) then the simplest case is when $E[\phi] \isom
\mu_3$. For a general $3$-isogeny we may reduce to either one of these
two simplest cases at the expense of making a quadratic extension to
our base field.

An alternative approach to improving a descent by $p$-isogeny to a
full $p$-descent is described in~\cite{creutzmiller}. The method there
does however require a rigorous computation of $S$-units in a degree
$p$ extension of $K$. In contrast, we are free to solve norm equations
by any method we like, since once a solution is found it is
straightforward to verify it is correct.

The following notation will be used throughout. For $K$ a field, we
write $\Kbar$ for its separable closure and $G_K = \Gal(\Kbar/K)$ for
its absolute Galois group. The Galois cohomology group $H^i(G_K,-)$ is
abbreviated as $H^i(K,-)$, and we write $\Hom(G_K,-)$ for the
continuous homomorphisms.  The unit group of a ring $R$ is denoted
$R^\times$.  We write $\mu_p$ for the group of $p$th roots of unity,
and $\zeta_p$ for a generator.

The calculations in Section~\ref{sec4} were carried out using Magma
\cite{magma}. This paper is based on the first author's PhD thesis
\cite{phdMonique}.

\section{The Cassels-Tate pairing} 
\label{sec1}

In this section, we define the global Cassels-Tate
pairing~\eqref{CTP}.  The definition is given as a sum of local
pairings, so we define these first.  Let $K_v$ denote the localisation
of $K$ at a place $v$. The Weil pairing $e_\phi:E[\phi]\times
E'[\phihat]\rightarrow \mu_p$ induces by cup product a pairing
\[ \cup:H^1(K_v,E[\phi])\times H^1(K_v,E'[\phihat])\rightarrow H^2(K_v,\mu_p).
\]
It terms of cocycles we have $(\xi\cup\eta)_{\sigma, \tau} =
e_\phi(\xi_\sigma, \sigma(\eta_\tau))$.  Since $H^2(K_v,\mu_p) \isom
\Br(K_v)[p]$ we can then apply the invariant map
$\text{inv}_{K_v}:\text{Br}(K_v)\rightarrow \Q/\Z$, from local class
field theory, to obtain the \emph{local Tate pairing}:
\begin{equation}
  \label{localtate}
  \langle~,~\rangle_{v,e_\phi}: H^1 (K_v, E[\phi]) 
  \times H^1(K_v,{E'}[\phihat]) \rightarrow \Q/\Z.
\end{equation}

\subsection{The global pairing}
Taking Galois cohomology of the short exact sequences
\begin{equation*}
\begin{aligned}
  \xymatrix{ 0 \ar[r] & E[\phi] \ar[r]^{\iota} \ar@{=}[d] & E[p]
    \ar@{^{(}->}[d] \ar[r]^\phi
    &   E'[\phihat] \ar@{^{(}->}[d]  \ar[r] & 0 \\
    0 \ar[r] & E[\phi] \ar[r] & E \ar[r]^\phi & E' \ar[r] & 0 }
\end{aligned}
\end{equation*}
we obtain a commutative diagram with exact rows
\begin{equation}
\begin{aligned}
\label{diag1}
\xymatrix{ H^1(K,E[p]) \ar[r]^{\phi_*} \ar[d] & H^1(K,E'[\phihat])
  \ar[r] \ar[d]
  &  H^2(K,E[\phi]) \ar[d] \\
  \prod_v H^1(K_v,E) \ar[r] & \prod_v H^1(K_v,E') \ar[r] & \prod_v
  H^2(K_v,E[\phi]) \rlap{.} }
\end{aligned}
\end{equation}

\begin{Lemma}\label{x1andx}
  Any element $x \in S^{(\phihat)}(E'/K)$ %\Sha({E'}/K)$,
  can be lifted to $x_1\in H^1(K,E[p])$ with $\phi_*(x_1)=x$.
\end{Lemma}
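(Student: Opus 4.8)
The plan is to read off the lifting obstruction from the long exact cohomology sequence attached to the top short exact sequence $0 \to E[\phi] \xrightarrow{\iota} E[p] \xrightarrow{\phi} E'[\phihat] \to 0$, namely
\[ H^1(K, E[\phi]) \to H^1(K, E[p]) \xrightarrow{\phi_*} H^1(K, E'[\phihat]) \xrightarrow{\delta} H^2(K, E[\phi]). \]
Exactness shows that the image of $\phi_*$ is precisely $\ker \delta$, so $x$ admits a lift $x_1$ with $\phi_*(x_1) = x$ if and only if $\delta(x) = 0$. The entire problem therefore reduces to proving that the connecting map $\delta$ annihilates the Selmer group $S^{(\phihat)}(E'/K)$.

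First I would exploit the commutative diagram \eqref{diag1}. By the very definition of the $\phihat$-Selmer group, the condition $x \in S^{(\phihat)}(E'/K)$ says exactly that the middle vertical map of \eqref{diag1} sends $x$ to $0$ in $\prod_v H^1(K_v, E')$. Commutativity of the right-hand square then forces the image of $\delta(x)$ under the localisation map $H^2(K, E[\phi]) \to \prod_v H^2(K_v, E[\phi])$ to vanish; in other words, $\delta(x)$ lies in $\Sha^2(K, E[\phi])$, the kernel of this localisation map.

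The crux, and the step I expect to be the main obstacle, is to show that $\Sha^2(K, E[\phi]) = 0$, which I would treat separately in the two cases of interest. When $E[\phi] \isom \mu_p$ we have $H^2(K, E[\phi]) \isom \Br(K)[p]$, and the injectivity of $\Br(K) \to \bigoplus_v \Br(K_v)$ supplied by the Albert--Brauer--Hasse--Noether theorem of global class field theory gives $\delta(x) = 0$ at once. When $E[\phi] \isom \Z/p\Z$, I would appeal to Poitou--Tate duality: since $\mu_p$ is the Cartier dual of $\Z/p\Z$, it identifies $\Sha^2(K, \Z/p\Z)$ with the dual of $\Sha^1(K, \mu_p)$, and the latter consists of classes in $K^\times/(K^\times)^p$ that are everywhere locally $p$-th powers. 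For $p$ odd this group vanishes (no Grunwald--Wang obstruction occurs), so again $\delta(x) = 0$. In both cases the vanishing of $\delta(x)$ together with exactness of the top row produces the required lift $x_1 \in H^1(K, E[p])$.
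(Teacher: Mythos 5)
Your reduction is exactly the paper's: a diagram chase in \eqref{diag1} shows that it suffices to prove $\delta(x)=0$, and since $x$ is a Selmer element, commutativity of the right-hand square places $\delta(x)$ in the kernel $\Sha^2(K,E[\phi])$ of the localisation map $H^2(K,E[\phi])\to\prod_v H^2(K_v,E[\phi])$; both you and the paper then aim to show this kernel vanishes. The arguments you give in the two cases you treat are correct: for $E[\phi]\isom\mu_p$ the injectivity of $\Br(K)\to\bigoplus_v\Br(K_v)$ applies directly, and for $E[\phi]\isom\Z/p\Z$ the Poitou--Tate identification of $\Sha^2(K,\Z/p\Z)$ with the dual of $\Sha^1(K,\mu_p)$, together with the vanishing of the latter for odd $p$ (no Grunwald--Wang obstruction), is sound.

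The gap is one of coverage. Lemma~\ref{x1andx} lives in Section~\ref{sec1}, where $\phi$ is an \emph{arbitrary} $p$-isogeny and the lemma underpins the definition of the Cassels--Tate pairing (Definition~\ref{wpDefCT}) in that generality; the hypothesis that $E[\phi]$ is isomorphic to $\mu_p$ or $\Z/p\Z$ as a Galois module is only imposed from Section~\ref{sec2} onward, for the computational method. In general $E[\phi]$ is a one-dimensional $\F_p$-Galois module, i.e.\ a twist of $\mu_p$ by a character of order dividing $p-1$, and your two cases do not exhaust these possibilities, so your proof does not establish the lemma as stated. The paper handles the general case uniformly: choose $L/K$ finite of degree $n$ coprime to $p$ with $E[\phi]\isom\mu_p$ over $L$; the localisation map over $L$ is then injective by class field theory, and since $\cor\circ\res$ is multiplication by $n$, the kernel of the localisation map over $K$ is killed by both $p$ and $n$, hence trivial. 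The same restriction--corestriction trick would repair your proof, and indeed, applied with $L=K(\zeta_p)$, it disposes of your $\Z/p\Z$ case without invoking Poitou--Tate duality at all.
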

\begin{proof}
  The Selmer group $S^{(\phihat)}(E'/K)$ is by definition the kernel
  of the middle vertical map in~\eqref{diag1}. By a diagram chase, it
  suffices to show that the right hand vertical map is injective.
  Making a finite extension $L/K$ of degree coprime to $p$, we may
  ensure that $E[\phi]\isom\mu_p$ over $L$, and so
  $H^2(L,E[\phi])\isom \Br(L)[p]$. Let $v$ be a place of $K$. Then we
  have the following commutative diagram.
\begin{equation*}
\begin{aligned}
  \xymatrix{ H^2(K,E[\phi]) \ar[r]^-{\res} \ar[d]_-{\loc_1} &
    H^2(L,E[\phi])
    \ar[d]_-{\loc_2}\ar[r]^-{\cor} & H^2(K,E[\phi])\ar[d]_-{\loc_1}  \\
    H^2(K_v,E[\phi]) \ar[r]^-{\res}& \oplus_{w|v}H^2(L_w,E[\phi])
    \ar[r]^-{\cor} & H^2(K_v,E[\phi]) }
\end{aligned}
\end{equation*}

By global class field theory, the following exact sequence holds for
any number field $L$.
\begin{align*} %\label{BrauerShort}
   0\longrightarrow \Br(L) \longrightarrow \bigoplus_w
   \Br(L_w) \xrightarrow{\sum\inv_w} \Q/\Z
   \longrightarrow 0
\end{align*}
Thus the map $\prod_v \loc_2$ is injective.  By \cite[Proposition
3.3.7]{csagc} the composite $\cor \circ \res$ is multiplication by $n
= [L:K]$. The kernel of $\prod_v \loc_1$ is now both $p$-torsion and
$n$-torsion. Since $p$ and $n$ are coprime, it follows that $\prod_v
\loc_1$ is injective as required.
\end{proof}

The Kummer exact sequences for $[p] : E \to E$ and $\phihat : E' \to
E$ give the rows of the following commutative diagram
\begin{equation}
\label{keydiagram}
\begin{aligned}
  \xymatrix{
    & & H^1(K,E[\phi]) \ar[d]^{\iota_*} \\
    E(K) \ar[r]^-{p} \ar[d]_\phi & E(K) \ar@{=}[d]
    \ar[r]^-{\delta_p} & H^1(K,E[p]) \ar[d]^{\phi_*} \\
    E'(K) \ar[r]^-{\phihat} & E(K) \ar[r]^-{\delta_{\phihat}} &
    H^1(K,E'[\phihat]) \rlap{.} }
\end{aligned}
\end{equation}
The right hand column is the long exact sequence~\eqref{H1seq}.  We
also consider the analogue of this diagram with $K$ replaced by $K_v$.
In the terminology of \cite{PoonenStoll}, the following is the ``Weil
pairing definition'' of the Cassels-Tate pairing.

\begin{Definition}
  [Definition of the Cassels-Tate pairing]
  \label{wpDefCT}
  Let $x,y \in S^{(\phihat)}(E'/K)$. By Lemma~\ref{x1andx} there
  exists $x_1 \in H^1(K,E[p])$ with $\phi_*(x_1) = x$.  We write $x_v,
  y_v, x_{1,v}$ for the localisations of $x,y, x_1$ at a place $v$.
  For each place $v$ we pick $P_v \in E(K_v)$ with
  $\delta_{\phihat}(P_v) = x_v$. Then
% $\delta_p(P_v) - x_{1,v} = \iota_*(\xi_v)$ 
% Sign switched here to agree with the examples!
  $x_{1,v} - \delta_p(P_v)  = \iota_*(\xi_v)$ 
 for some $\xi_v \in H^1(K_v,E[\phi])$.  The
  Cassels-Tate pairing is defined as
  \begin{align*}%\label{ctpair}
   \langle x,y\rangle_{\CT} = \sum_{v} %  \in M_K} 
   \langle \xi_v,y_v\rangle_{v,e_\phi}
\end{align*}
where the sum is over all places $v$ of $K$, and
$\langle\;,\;\rangle_{v,e_\phi}$ is the local Tate
pairing~\eqref{localtate}.
\end{Definition}
It may be shown that the pairing is independent of the choice of
global lift~$x_1$ and the choices of local points~$P_v$.  For further
details, and properties of the pairing, see for example~\cite{CasIV,
MilneADT, McCallum, PoonenStoll, ctpps}.

As we describe in the next section, the local Tate pairing is closely
related to the Hilbert norm residue symbol. It can therefore be
computed using standard techniques. A more serious problem is that of
computing a global lift $x_1$ of $x$.  In Section~\ref{sec2} we
explain how this may be reduced to solving a norm equation.  This then
motivates our work on norm equations in Section~\ref{sec3}.

\subsection{Computing the local pairing}\label{computingLocalPairing}
Let $p$ be a prime. Let $\phi : E \to E'$ be a $p$-isogeny of elliptic
curves defined over a number field $K$. We fix $L/K$ a finite Galois
extension of degree coprime to $p$, such that all points in the
kernels of $\phi$ and $\phihat$ are defined over $L$. By properties of
the Weil pairing we have $\mu_p \subset L$. We may therefore fix
isomorphisms $E[\phi] \isom \mu_p$ and $E'[\phihat] \isom \mu_p$ over
$L$. These maps induce, by restriction and the Kummer isomorphism,
injective group homomorphisms
\[ \overline{w}_\phi:H^1(K,E[\phi]) \ra L^\times/(L^\times)^p, \]
and 
\[ \overline{w}_\phihat:H^1(K,E'[\phihat]) \ra L^\times/(L^\times)^p. \] 
We also write $\overline{w}_\phi$ and $\overline{w}_\phihat$ for the
local analogues of these maps.

\begin{Lemma}\label{prop:localpair}
  There exists a primitive $p$th root of unity $\zeta_p \in L$ such
  that for all places $v$ of $K$ the local Tate
  pairing~\eqref{localtate} is given by
\begin{equation}
  \label{localformula}
  \langle x, y\rangle_{v,e_\phi} = \frac{1}{[L_w:K_v]}
  \Ind_{\zeta_p}( \overline{w}_\phi(x), \overline{w}_\phihat(y))_w 
\end{equation}
where $w$ is any place of $L$ dividing $v$,
\[ (~,~)_w : L_w^\times/(L_w^\times)^p \times
L_w^\times/(L_w^\times)^p \to \mu_p \] 
is the Hilbert norm residue symbol, and $\Ind_{\zeta_p} : \mu_p \to
\frac{1}{p}\Z/\Z$ is the isomorphism sending $\zeta_p \mapsto
\frac{1}{p}$.
\end{Lemma}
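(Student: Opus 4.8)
The plan is to reduce the local Tate pairing, defined via cup product followed by the invariant map, to the Hilbert norm residue symbol by passing to the field $L$ where $E[\phi]\isom\mu_p$ and $E'[\phihat]\isom\mu_p$. First I would fix the isomorphisms $E[\phi]\isom\mu_p$ and $E'[\phihat]\isom\mu_p$ over $L$ and check how the Weil pairing $e_\phi$ is expressed under them: since the Weil pairing is $\mu_p$-valued and the two chosen isomorphisms identify both kernels with $\mu_p$, the composite becomes a pairing $\mu_p\times\mu_p\to\mu_p$, which is necessarily given by $(\zeta^a,\zeta^b)\mapsto\zeta^{cab}$ for some integer $c$ coprime to $p$. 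By rescaling one of the identifications (equivalently, choosing the root of unity $\zeta_p$ appropriately), I can arrange $c=1$, i.e.\ that $e_\phi$ corresponds to the standard pairing $\mu_p\times\mu_p\to\mu_p$. This is the step that produces the specific $\zeta_p$ in the statement, and it must be done once and for all, uniformly in $v$, so I would emphasise that the choice is global and does not depend on the place.

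Next I would record the compatibility between cup product and restriction/corestriction. Over $L_w$, the cup product $H^1(L_w,\mu_p)\times H^1(L_w,\mu_p)\to H^2(L_w,\mu_p^{\otimes 2})\isom H^2(L_w,\mu_p)$ composed with $\inv_{L_w}$ is, by the standard identification of $H^1(L_w,\mu_p)$ with $L_w^\times/(L_w^\times)^p$ via Kummer theory, exactly the Hilbert symbol $(~,~)_w$ followed by $\Ind_{\zeta_p}$; this is the classical description of the local symbol and I would cite it rather than reprove it. The maps $\overline{w}_\phi$ and $\overline{w}_\phihat$ are by construction the compositions of restriction to $L$ with these Kummer isomorphisms, so on localising at $w\mid v$ they send $x_v,y_v$ to the classes in $L_w^\times/(L_w^\times)^p$ feeding into the Hilbert symbol.

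The core of the argument is then to relate the pairing computed over $K_v$ to the one computed over $L_w$. The cup product is compatible with restriction, so $\res_w(x_v\cup y_v)=\res_w(x_v)\cup\res_w(y_v)$, and the right side is what the Hilbert symbol computes. To get back to $K_v$ I would use that $\inv_{L_w}\circ\res_w=[L_w:K_v]\cdot\inv_{K_v}$ on $\Br(K_v)$, the standard behaviour of the invariant map under restriction to a local extension. Since $L/K$ has degree coprime to $p$ and the classes are $p$-torsion, $\res_w$ is injective on the relevant $p$-torsion, so no information is lost; reading off $\inv_{K_v}(x_v\cup y_v)$ from $\inv_{L_w}$ of the restricted class introduces precisely the factor $1/[L_w:K_v]$ appearing in \eqref{localformula}. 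Assembling these identifications gives the formula.

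The main obstacle I expect is bookkeeping the twist introduced by the Weil pairing identification and ensuring the single root of unity $\zeta_p$ works simultaneously for every place $v$: one has to verify that the constant $c$ relating $e_\phi$ to the standard $\mu_p$-pairing is genuinely independent of $v$ (which it is, being defined globally over $L$), and that absorbing it into the choice of $\zeta_p$ is consistent with the normalisation $\Ind_{\zeta_p}(\zeta_p)=\tfrac1p$. The factor $1/[L_w:K_v]$ and the coprimality of $[L:K]$ with $p$ must be tracked carefully so that the rescaling does not secretly depend on $w$; beyond this, the remaining steps are standard functoriality statements for cup products and invariant maps in local class field theory.
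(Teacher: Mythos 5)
Your proposal is correct and follows essentially the same route as the paper's proof: identify $e_\phi$ with the standard pairing $\mu_p\times\mu_p\to\mu_p$ by a single global choice of $\zeta_p$ (the paper treats $L=K$ first and notes, citing Serre, that the Hilbert symbol itself is independent of this choice), then invoke the classical cup-product description of the Hilbert symbol and the compatibility of cup products and the invariant map with restriction to get the factor $1/[L_w:K_v]$. The only presentational difference is that the paper works out the $L=K$ case explicitly and cites Cassels--Fr\"ohlich for the restriction step, whereas you interleave these; the mathematical content is the same.
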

\begin{proof}
  We first treat the case $L=K$. The pairing
  \begin{equation}
  \label{mup-pair}
  \mu_p \times \mu_p \to \mu_p; \, (\zeta_p^a,\zeta_p^b) \mapsto \zeta_p^{ab}
\end{equation}
induces by cup product, the Kummer isomorphism and the local invariant
map, a pairing
\[ \{ ~, ~ \}_v : K_v^\times/(K_v^\times)^p \times 
K_v^\times/(K_v^\times)^p \to \tfrac{1}{p} \Z/ \Z. \] 
The Hilbert norm residue symbol is $(x,y)_v = \zeta_p^{p \{x,y \}_v}$.
By \cite[Prop. XIV.2.6]{serre} it is independent of the choice of
$\zeta_p$.

If we make an appropriate choice of $\zeta_p$ then our identifications
$E[\phi] \isom \mu_p$ and $E'[\phihat] \isom \mu_p$ identify the Weil
pairing $e_\phi : E[\phi] \times E'[\phihat] \to \mu_p$
with~\eqref{mup-pair}. This proves~\eqref{localformula}.  The general
case, with $L \not= K$, follows by standard properties of the cup
product and the local invariant map under restriction, for which we
refer to \cite[Proposition IV.7.9(iii)]{casfroh} and \cite[Theorem
VI.1.3]{casfroh}.
\end{proof}

\begin{Remark}
  In practice we are happy to compute the Cassels-Tate pairing up to
  an overall scaling. Therefore the choice of $\zeta_p$ in
  Lemma~\ref{prop:localpair} does not matter, provided that the same
  global choice is used in all our local calculations.
\end{Remark}

We now suppose that $\mu_p \subset K_v$ and describe some methods for
computing the Hilbert norm residue symbol. In fact the symbol may be
defined with $p$ replaced by any integer $m \ge 2$, and we now work in
this generality.

\begin{Proposition}\label{UsingCProp}
  Assume that $\mu_m \subset K_v$. The Hilbert norm residue symbol
 \[  (~,~)_v : K_v^\times/(K_v^\times)^m \times  K_v^\times/(K_v^\times)^m
 \to \mu_m \]
  has the following properties.
 \begin{enumerate}
  \item $(a,b)_v(a,c)_v = (a,bc)_v$ 
% \item $(a,b)_v=1$ if either $a$ or $b\in (K_v^\times)^m$.
% ---------already implicit in the above
  \item $(a,b)_v=1$ if $b$ is a norm for the extension $K_v(\sqrt[m]{a})/K_v$.
In particular $(a,-a)_v=(a,1-a)_v=1$.
%  \item $(a,b)_v=1$ if $a+b\in (K_v^\times)^m$
% ---------follows very easily from other statement
  \item $(a,b)_v(b,a)_v=1$
  \item If $v = \pp$ is a prime not dividing $m$ and
$v_\pp(a)= 0$ then 
\[  (a,b)_v = \left(\frac{a}{\pp}\right)^{v_\pp(b)} 
\quad \text{ where } \quad
\left(\frac{a}{\pp}\right) \equiv a^{\frac{N\pp-1}{m}} \pmod{\pp}. \]
 \end{enumerate}
\end{Proposition}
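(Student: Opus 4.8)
The plan is to unwind the cohomological definition of the symbol recorded just before Lemma~\ref{prop:localpair} and to reduce each of the four properties to a standard feature of the cup product, the invariant map, or the cyclic algebra attached to the symbol. Fix a primitive $m$th root of unity $\zeta = \zeta_m \in K_v$. Kummer theory identifies $K_v^\times/(K_v^\times)^m$ with $H^1(K_v,\mu_m)$, and under this identification the cup product $a \cup b \in H^2(K_v,\mu_m) \isom \Br(K_v)[m]$, formed via the symmetric coefficient pairing $\mu_m \times \mu_m \ra \mu_m$, is the Brauer class of the cyclic algebra $(a,b)_\zeta$. The symbol is then $(a,b)_v = \zeta^{\,m\{a,b\}_v}$ with $\{a,b\}_v = \inv_{K_v}(a \cup b) \in \tfrac1m\Z/\Z$, exactly as in the case $L=K$ of Lemma~\ref{prop:localpair}. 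Every property thus becomes a statement about this cup product or this algebra.

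First I would prove (i). The cup product $H^1 \times H^1 \ra H^2$ is bilinear and $\inv_{K_v}$ is a homomorphism, so $a \mapsto \{a,b\}_v$ and $b \mapsto \{a,b\}_v$ are additive; exponentiating $\zeta$ gives $(a,b)_v(a,c)_v = (a,bc)_v$ at once. For (iii) I would invoke graded-commutativity of the cup product: since both classes lie in $H^1$ the sign is $(-1)^{1\cdot 1}=-1$, while the coefficient pairing is symmetric, so $\{a,b\}_v = -\{b,a\}_v$ in $\tfrac1m\Z/\Z$ and hence $(a,b)_v(b,a)_v = 1$.

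For (ii) the main assertion is the norm property, which I would extract from the cyclic-algebra description. When $a \notin (K_v^\times)^m$ the extension $K_v(\sqrt[m]{a})/K_v$ is cyclic of degree $m$, and the class of $(a,b)_\zeta$ in $\Br(K_v)$ is trivial precisely when $b$ lies in the norm group $N_{K_v(\sqrt[m]{a})/K_v}$; a trivial Brauer class has invariant $0$, so $(a,b)_v = 1$ (the case $a \in (K_v^\times)^m$ being vacuous). The two special cases then follow formally. Setting $\alpha = \sqrt[m]{a}$ and evaluating the identity $\prod_{i=0}^{m-1}(1 - \zeta^i T) = 1 - T^m$ at $T = \alpha$ gives $1 - a = N_{L/K}(1-\alpha)$, whence $(a,1-a)_v = 1$; then $(a^{-1},1-a^{-1})_v=1$ together with (i) gives $(a,1-a^{-1})_v=1$, and writing $-a = (1-a)(1-a^{-1})^{-1}$ yields $(a,-a)_v = (a,1-a)_v\,(a,1-a^{-1})_v^{-1} = 1$.

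The main obstacle is (iv), the explicit tame formula, since unlike the others it demands a genuine local computation rather than a formal manipulation. Here I would use that for $v = \pp \nmid m$ with $a$ a unit, the polynomial $T^m - a$ is separable modulo $\pp$, so $K_v(\sqrt[m]{a})/K_v$ is unramified and $(a,b)_\zeta$ lies in the unramified part of $\Br(K_v)[m]$. On this part the invariant is computed by pairing $v_\pp(b)$ against the Frobenius substitution, so $\{a,b\}_v$ depends only on $v_\pp(b)$ and on how $\mathrm{Frob}_\pp$ moves $\sqrt[m]{a}$, which is recorded precisely by the $m$th power residue symbol $\left(\tfrac{a}{\pp}\right) \equiv a^{(N\pp-1)/m} \pmod{\pp}$; assembling these gives $(a,b)_v = \left(\tfrac{a}{\pp}\right)^{v_\pp(b)}$. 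All four statements are classical, so rather than reprove the cup-product, invariant-map, and tame-symbol inputs in full I would cite~\cite{serre, casfroh} for them.
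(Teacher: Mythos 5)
Your proposal is correct, but it takes a genuinely different route from the paper, whose ``proof'' of this proposition is a bare citation to \cite[Exercise 2]{casfroh} and \cite[Proposition II.7.1.1]{ggras} with no argument given. You instead derive each property from the cohomological definition of the symbol: (i) from bilinearity of the cup product and additivity of $\inv_{K_v}$; (iii) from graded-commutativity of $H^1 \cup H^1$ with the symmetric coefficient pairing; (ii) from the splitting criterion for cyclic algebras --- which is exactly the paper's own Lemma~\ref{lem:cyclic}, in degree $m$ rather than prime degree $p$ --- followed by the Steinberg-type manipulation $-a = (1-a)(1-a^{-1})^{-1}$; and (iv) from the observation that $K_v(\sqrt[m]{a})/K_v$ is unramified when $\pp \nmid m$ and $v_\pp(a)=0$, so the invariant is read off from the Frobenius and $v_\pp(b)$. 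What your approach buys is self-containedness and a tight connection to machinery the paper already sets up (the diagram~\eqref{cyclic-cup}, Lemma~\ref{lem:cyclic}, and the definition of the symbol in the proof of Lemma~\ref{prop:localpair}); what the paper's citation buys is brevity, appropriate since these facts are classical. Two small imprecisions in your write-up, neither fatal: first, when $a \in (K_v^\times)^m$ the norm statement in (ii) is not vacuous but trivially true, since $\mu_m \subset K_v$ forces $K_v(\sqrt[m]{a}) = K_v$ and the symbol is well defined modulo $m$th powers; second, when $[K_v(\sqrt[m]{a}):K_v] = d$ is a proper divisor of $m$, the identity $1-a = \prod_{i=0}^{m-1}(1-\zeta^i\alpha)$ exhibits $1-a$ as a product of $m/d$ Galois-orbit norms rather than literally as $N_{K_v(\alpha)/K_v}(1-\alpha)$, so one extra grouping step is needed there.
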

\begin{proof}
 See \cite[Exercise 2]{casfroh} or \cite[Proposition II.7.1.1]{ggras}.
\end{proof}

Proposition~\ref{UsingCProp} can be used to compute $(a,b)_v$ whenever
$v\nmid m \infty$. Taking $m=p$ an odd prime, the following will
suffice for our purposes in the case $v \mid p$.  Let
$K=\Q(\zeta_p)$. Then %$p$ is totally ramified in $K/\Q$ and
$\lambda=1-\zeta_p$ generates the
% prime ideal corresponding to the unique prime $v$ 
unique prime of $K$ lying over $p$.  It is shown in
\cite[Exercise~2.13]{casfroh} that $K_v^\times/(K_v^\times)^p$ has
basis $\lambda,\eta_1 ,\ldots, \eta_p$ where $\eta_i=1-\lambda^i$, and
an explicit recipe is given for computing the Hilbert norm residue
symbol. In the case $p = 3$ this works out as
\begin{equation}
\label{table:pIs3Table}
\begin{array}{c|cccc} 
  &\lambda&\eta_1&\eta_2&\eta_3\\
  \hline
  \lambda&0&0&0&\zeta_3^{2}\\
  \eta_1&0&0&\zeta_3&0\\
  \eta_2&0&\zeta_3^{2}&0&0\\
  \eta_3&\zeta_3&0&0&0
% \end{tabular}
% \caption{Matrix for computing local pairing in $p=3$ case.}
% \end{table}
\end{array}
\end{equation}

\section{Galois cohomology}
\label{sec2}
In this section, we give an explicit description of the long exact
sequence~\eqref{H1seq} in terms of \'etale algebras. As explained in
Section~\ref{sec1} this will enable us to compute the Cassels-Tate
pairing.

\subsection{Etale algebras}
\label{sec21}
Following~\cite{ss} we interpret the Galois cohomology groups
in~\eqref{H1seq} in terms of \'etale algebras. This makes the groups
more amenable for practical computation.  We work over a field $K$ of
characteristic $0$.

Let $\Phi$ be a finite set with $G_K$-action. The \'etale algebra $D$
associated to $\Phi$ is the set of all $G_K$-equivariant maps $\Phi
\to \Kbar$. This is a $K$-algebra under pointwise operations.  If
$P_1, \ldots, P_n \in \Phi$ are representatives for the $G_K$-orbits
then evaluation at these points gives an isomorphism \[D \isom K(P_1)
\times \ldots \times K(P_n).\] In particular $D$ is a product of
finite field extensions of $K$.  We also write
$\overline{D}=D\otimes_K \overline{K}$.  This is the $\Kbar$-algebra
of all maps $\Phi \to \Kbar$.

We fix $p$ an odd prime.  Let $\psi : E \to E'$ be an isogeny of
elliptic curves with $E[\psi] \subset E[p]$, and let $\psihat$ be its
dual. Let $D$ be the \'etale algebra of $E'[\psihat]$.  Let
\begin{align*}
w_\psi : E[\psi]&\longrightarrow \mu_p(\overline{D})\\
P&\longmapsto(Q\mapsto e_\psi (P,Q)),
\end{align*}
be the map induced by the Weil pairing $e_\psi$.  This induces a map
on $H^1$'s that on composing with the Kummer isomorphism gives a group
homomorphism
\[ \overline{w}_\psi:H^1(K,E[\psi]) \longrightarrow
D^\times/(D^\times)^p. \]

Now let $\phi : E \to E'$ be a $p$-isogeny. The Weil pairings
$e_\phi$, $e_p$ and $e_{\phihat}$ are compatible in the sense that
they give an isomorphism between the exact sequence
\begin{equation}
\label{ses}
0 \longrightarrow E[\phi] \stackrel{\iota}{\longrightarrow}  E[p] \stackrel{\phi}{\longrightarrow} E'[\phihat] \longrightarrow 0. 
\end{equation}
and the exact sequence of Cartier duals.  Let $A_1$, $A_2$ and $A$ be
the \'etale algebras of $E'[\phihat]$, $E[\phi]$ and $E[p]$. By the
compatibility of the Weil pairings, % mentioned above,
we obtain a commutative diagram
\begin{equation}\label{GeneralDiagram}
\begin{aligned}
  \xymatrix{
    H^1(K,E[\phi])\ar[d]_{\overline{w}_\phi} \ar[r]^ {\iota_{*}}& 
    H^1(K,E[p]) \ar[d]_{\overline{w}_p} \ar[r]^ {\phi_{*}} & 
    H^1(K,E'[\phihat]) \ar[d]_{\overline{w}_\phihat} \\
    A_1^\times/(A_1^\times)^p \ar[r]^-{\phi^*} & A^\times/(A^\times)^p
    \ar[r]^-{\iota^*} & A_2^\times/(A_2^\times)^p }
\end{aligned}
\end{equation}
where the first row is~\eqref{H1seq}, i.e. the long exact sequence
associated to~\eqref{ses}. The maps in the second row (which is not
exact) are the pull backs by $\phi$ and $\iota$.

It is shown in \cite[Section 5]{ss} that the vertical maps
in~\eqref{GeneralDiagram} are injective, and their images are
described as follows.  We fix $g$ a primitive root mod $p$, and let
$\sigma_g$ be the automorphism of $A_1$, $A_2$ or $A$ given by
$(\sigma_g \alpha)(P) = \alpha(gP)$.  By \cite[Lemma~5.2]{ss} we have
\begin{equation}
\label{H1isog} 
H^1(K,E[\phi])\isom \text{ker}(g-\sigma_g: A_1^\times/(A_1^\times)^p \rightarrow A_1^\times/(A_1^\times)^p)
\end{equation}
and likewise for $H^1(K,E'[\phihat])$.  The corresponding description
of $H^1(K,E[p])$ involves the set $\Lambda$ of affine lines in $E[p]$
that do not pass through the origin ${\mathcal O}$.  Let $B$ be the
\'etale algebra of $\Lambda$. Then the map
\[ u: \mu_p(\overline{A}) \to \mu_p(\overline{B}) ; \,\, \alpha
\mapsto \big( \ell \mapsto \prod_{P\in \ell} \alpha(P) \big) \]
induces a map on $H^1$'s, and so by the Kummer isomorphism gives us a
group homomorphism $\overline{u} : 
A^\times/(A^\times)^p \longrightarrow B^\times/(B^\times)^p.$

\begin{Theorem}[Schaefer-Stoll {\cite[Corollary 5.9]{ss}}]
\label{TheoremSS} 
\[ 
H^1(K,E[p]) \isom \ker(g-\sigma_g: A^\times/(A^\times)^p \rightarrow
A^\times/(A^\times)^p) \cap \ker(\overline{u}).
\]  
\end{Theorem}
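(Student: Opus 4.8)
The plan is to argue intrinsically through the Weil pairing, rather than through the isogeny filtration~\eqref{ses}, since the condition defined by $\overline{u}$ reflects the $2$-dimensionality of $E[p]$ and is invisible in the flanking groups $H^1(K,E[\phi])$ and $H^1(K,E'[\phihat])$. Write $\overline{A}=\Map(E[p],\Kbar)$, so that $\mu_p(\overline{A})=\Map(E[p],\mu_p)$ and, by Shapiro's lemma together with the Kummer isomorphism, $H^1(K,\mu_p(\overline{A}))\isom A^\times/(A^\times)^p$. The map $w_p\colon E[p]\to\mu_p(\overline{A})$, $P\mapsto(Q\mapsto e_p(P,Q))$, identifies $E[p]$ with the subgroup $\Hom(E[p],\mu_p)\subset\Map(E[p],\mu_p)$ of \emph{group} homomorphisms, and $\overline{w}_p$ is the induced map on $H^1$, which we may assume is injective. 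Thus $H^1(K,E[p])\isom\im(\overline{w}_p)$, and the theorem amounts to the equality
\[ \im(\overline{w}_p)=\ker(g-\sigma_g)\cap\ker(\overline{u}) \]
inside $A^\times/(A^\times)^p$.

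The inclusion $\subseteq$ I would deduce formally from two module-level identities. First, if $\alpha=w_p(R)$ then $(\sigma_g\alpha)(Q)=\alpha(gQ)=e_p(R,Q)^g=\alpha(Q)^g$, so $w_p(E[p])\subseteq\ker(g-\sigma_g)$ as submodules of $\mu_p(\overline{A})$. Second, for any affine line $\ell=\{R+tS:t\in\Z/p\}$ with $O\notin\ell$ one has $\sum_{t}(R+tS)=pR+\tfrac{p(p-1)}{2}S=O$ (using $p$ odd), so a homomorphism $\alpha$ satisfies $\prod_{P\in\ell}\alpha(P)=\alpha(O)=1$, giving $w_p(E[p])\subseteq\ker(u)$. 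Applying the functor $H^1(K,-)$ to the relations $(g-\sigma_g)\circ w_p=0$ and $u\circ w_p=0$ then yields $\im(\overline{w}_p)\subseteq\ker(g-\sigma_g)\cap\ker(\overline{u})$.

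For the reverse inclusion the key input is the module-level equality $w_p(E[p])=\ker(g-\sigma_g)\cap\ker(u)$ inside $\mu_p(\overline{A})$: a function $\alpha\colon E[p]\to\mu_p$ that is homogeneous of degree one (the condition $\alpha(gP)=\alpha(P)^g$, which also forces $\alpha(O)=1$) and has trivial product along every affine line missing $O$ (the condition defining $\ker u$) is necessarily additive, hence a homomorphism. For $p=3$ this is immediate, since for independent $P,Q$ the set $\{P,Q,-P-Q\}$ is exactly an affine line not through $O$, so the line condition reads $\alpha(P)\alpha(Q)=\alpha(P+Q)$; for general odd $p$ the same conclusion holds by a short combinatorial argument on $\F_p^2$. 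Writing $\Psi=(g-\sigma_g,u)\colon\mu_p(\overline{A})\to\mu_p(\overline{A})\times\mu_p(\overline{B})$ with image $W$, this identity says $0\to E[p]\xrightarrow{w_p}\mu_p(\overline{A})\xrightarrow{\Psi}W\to0$ is exact, and the long exact sequence together with the injectivity of $\overline{w}_p$ gives $\im(\overline{w}_p)=\ker(\Psi_*\colon H^1(K,\mu_p(\overline{A}))\to H^1(K,W))$.

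It remains to match $\ker(\Psi_*)$ with $\ker(g-\sigma_g)\cap\ker(\overline{u})$, and this is where I expect the real difficulty. The inclusion $W\hookrightarrow\mu_p(\overline{A})\times\mu_p(\overline{B})$ induces $j\colon H^1(K,W)\to H^1(K,\mu_p(\overline{A}))\times H^1(K,\mu_p(\overline{B}))$ with $j\circ\Psi_*$ equal to the map $\alpha\mapsto((g-\sigma_g)\alpha,\overline{u}(\alpha))$, so $\ker(\Psi_*)\subseteq\ker(g-\sigma_g)\cap\ker(\overline{u})$ is automatic and equality is equivalent to $\ker(j)\cap\im(\Psi_*)=0$. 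The obstacle is that kernels do not commute with $H^1(K,-)$, so one must control $\ker(j)$, which is the image of the connecting map out of $H^0$ of the cokernel $Q=(\mu_p(\overline{A})\times\mu_p(\overline{B}))/W$. I would resolve this using the (co)induced nature of $\mu_p(\overline{A})$ and $\mu_p(\overline{B})$, which makes their $H^0$ and $H^1$ computable orbit by orbit and the operators $g-\sigma_g,\overline{u}$ on cohomology literally the operators on $A^\times/(A^\times)^p$ and $B^\times/(B^\times)^p$, so that the relevant connecting map vanishes on $\im(\Psi_*)$. Everything else is bookkeeping; this cohomological descent from the module identity is the crux of the argument.
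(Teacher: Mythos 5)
First, a point of reference: the paper does not prove this statement at all --- it imports it wholesale from Schaefer--Stoll \cite[Corollary 5.9]{ss} --- so the only question is whether your argument is complete on its own terms. Your module-level work is correct: $w_p(E[p])\subseteq\ker(g-\sigma_g)\cap\ker(u)$ follows exactly as you say, the converse (a weight-one function with trivial products along lines missing ${\mathcal O}$ is additive) is true for all odd $p$ (your ``short combinatorial argument'' exists, though you do not give it), and your reduction of the theorem to the statement $\ker(j)\cap\im(\Psi_*)=0$, with $\ker(j)=\im\bigl(\delta\colon H^0(K,Q)\to H^1(K,W)\bigr)$, is the right framing. The gap is the last step, which you yourself flag as the crux and then dispose of in one sentence. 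The appeal to ``the (co)induced nature'' of $\mu_p(\overline{A})$ and $\mu_p(\overline{B})$ cannot do the job: Shapiro's lemma applies to those two modules, but $W$ and $Q$ are an image and a cokernel of maps between induced modules, and such subquotients are not themselves induced, so their cohomology is not ``computable orbit by orbit.'' Moreover the phrase ``the relevant connecting map vanishes on $\im(\Psi_*)$'' is not even well formed, since $\delta$ is defined on $H^0(K,Q)$, not on $H^1(K,W)$; what must be proved is $\im(\delta)\cap\im(\Psi_*)=0$, and no argument for this is given.

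That this vanishing is genuinely non-formal can be seen by computing the obstruction group. The $(g-\sigma_g)$-condition is indeed harmless: $\sigma_g$ has order dividing $p-1$, prime to $p$, so $\mu_p(\overline{A})$ splits as a direct sum of Galois-stable eigenspaces and $\ker\bigl((g-\sigma_g)_*\bigr)=H^1(K,V_1)$ with $V_1=\ker(\sigma_g-g)$, with no connecting maps involved. The content is entirely in $u$ restricted to $V_1$. Setting $W_1=u(V_1)$ and letting $N_1$ be the weight-one eigenspace of $\mu_p(\overline{B})$, one finds a second exact sequence $0\to W_1\to N_1\xrightarrow{\,v\,}\Hom(E[p],\mu_p)\to 0$, where $v(\beta)(P)=\prod_{\ell\ni P,\,{\mathcal O}\notin\ell}\beta(\ell)$. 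So the group carrying the obstruction is $H^0\bigl(K,\Hom(E[p],\mu_p)\bigr)\isom E(K)[p]$ --- a twist of $E[p]$, not an induced module, and \emph{nonzero} precisely in the $\Z/p\Z$-nonsplit case this paper relies on, so there is no cheap reason for the connecting map out of it to vanish. It does vanish, but proving this requires an explicit construction: for each $R\in E(K)[p]$ one exhibits a Galois-invariant element of $N_1$ mapping to $w_p(R)$, namely the function supported on lines of direction $\langle R\rangle$ given by $f(Q+\langle R\rangle)=e_p(R,Q)$; this shows $H^0(K,N_1)\to H^0(K,N_1/W_1)$ is surjective, hence $H^1(K,W_1)\to H^1(K,\mu_p(\overline{B}))$ is injective, which is exactly the statement you need. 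This construction (or an equivalent one, as in Schaefer--Stoll's own proof) is the real content of the theorem, and it is absent from your proposal.
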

In fact we have $A \isom K \times A'$ where $A'$ is the \'etale
algebra of $E[p] \setminus \{ \mathcal O \}$, and in
Theorem~\ref{TheoremSS} we are free to replace $A$ by $A'$. This
description of $H^1(K,E[p])$ can sometimes be simplified using the
following lemma.

\begin{Lemma}
\label{algebra-lemma}
Let $\alpha \in \im({\overline{w}_p}) \subset A^\times/(A^\times)^p$.
If $S,T \in E[p]$ then
\[ \frac{\alpha(S) \alpha(T)}{\alpha(S+T)} \in (K(S,T)^\times)^p. \]
\end{Lemma}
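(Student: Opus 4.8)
The plan is to unwind the definition of $\overline{w}_p$ through Hilbert's Theorem 90 and then exploit the bilinearity of the Weil pairing $e_p$. Since $\alpha \in \im(\overline{w}_p)$, I would first choose $\xi \in H^1(K,E[p])$ with $\overline{w}_p(\xi) = [\alpha]$, represented by a cocycle $\sigma \mapsto \xi_\sigma \in E[p]$. Composing with $w_p$ gives the $\mu_p(\overline{A})$-valued cocycle $\beta_\sigma$ with $\beta_\sigma(Q) = e_p(\xi_\sigma, Q)$. Because $H^1(K,\overline{A}^\times) = 0$ (Hilbert 90 combined with Shapiro's lemma, as $\overline{A}^\times$ is a product of induced modules), this cocycle is a coboundary: there is $\gamma \in \overline{A}^\times$, that is a map $\gamma : E[p] \ra \Kbar^\times$ not necessarily $G_K$-equivariant, with $\beta_\sigma = (\sigma\gamma)\gamma^{-1}$. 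Since each $\beta_\sigma$ is $p$-torsion, $\gamma^p$ is $G_K$-invariant, hence lies in $A^\times$ and represents $\overline{w}_p(\xi)$; I may therefore take $\alpha = \gamma^p$, so that $\alpha(P) = \gamma(P)^p$ for every $P \in E[p]$.

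Next I would introduce the auxiliary function
\[ h : E[p] \times E[p] \ra \Kbar^\times, \qquad h(S,T) = \frac{\gamma(S)\gamma(T)}{\gamma(S+T)}, \]
which is well defined since $\gamma$ takes values in $\Kbar^\times$. By construction $\frac{\alpha(S)\alpha(T)}{\alpha(S+T)} = h(S,T)^p$, so it suffices to prove that $h(S,T) \in K(S,T)^\times$.

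The key step is to show that $h$ is $G_K$-equivariant, i.e. $\sigma(h(S,T)) = h(\sigma S, \sigma T)$ for all $\sigma \in G_K$. Evaluating $\beta_\sigma = (\sigma\gamma)\gamma^{-1}$ at a point and rearranging gives $\sigma(\gamma(Q)) = e_p(\xi_\sigma, \sigma Q)\,\gamma(\sigma Q)$. Applying this to $S$, $T$ and $S+T$ and forming the ratio, the three Weil pairing contributions combine into $\frac{e_p(\xi_\sigma, \sigma S)\,e_p(\xi_\sigma, \sigma T)}{e_p(\xi_\sigma, \sigma S + \sigma T)}$, which is trivial by the bilinearity of $e_p$. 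This cancellation is precisely where the argument uses that $\alpha$ lies in $\im(\overline{w}_p)$ rather than being an arbitrary element of $A^\times/(A^\times)^p$. What remains is $\sigma(h(S,T)) = \frac{\gamma(\sigma S)\gamma(\sigma T)}{\gamma(\sigma S + \sigma T)} = h(\sigma S, \sigma T)$. Taking $\sigma \in G_{K(S,T)}$, which fixes $S$, $T$ and hence $S+T$, yields $\sigma(h(S,T)) = h(S,T)$, so $h(S,T) \in K(S,T)^\times$ and therefore $\frac{\alpha(S)\alpha(T)}{\alpha(S+T)} = h(S,T)^p \in (K(S,T)^\times)^p$, as required.

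I expect the only genuine subtlety to be bookkeeping with the cocycle and $G_K$-action conventions, in particular ensuring that the non-equivariant trivialisation $\gamma$ interacts correctly with evaluation at the twisted points $\sigma Q$, together with checking that the conclusion is independent of the choice of representative for the class of $\alpha$. On the latter point, replacing $\alpha$ by $\alpha c^p$ with $c \in A^\times$ multiplies the ratio by $\left(\frac{c(S)c(T)}{c(S+T)}\right)^p$, and $\frac{c(S)c(T)}{c(S+T)} \in K(S,T)^\times$ by the same equivariance argument applied to the already $G_K$-equivariant $c$, so the change is a $p$th power in $K(S,T)^\times$. The cancellation of the Weil pairing terms is the mechanism that makes everything work, and is the one genuinely pairing-theoretic ingredient.
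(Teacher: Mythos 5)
Your proof is correct. One point of comparison is worth noting: the paper does not actually prove this lemma at all --- its ``proof'' is a single citation to Lemma~3.8 of the explicit $n$-descent paper of Cremona, Fisher, O'Neil, Simon and Stoll (\cite{algebra}), so your argument supplies the content that the paper outsources. What you give is the natural and, as far as I can tell, complete argument: unwind $\overline{w}_p$ via the Kummer connecting map, use $H^1(K,\overline{A}^\times)=0$ (Hilbert~90 plus Shapiro, since $\overline{A}^\times$ is a product of coinduced modules over the orbit stabilisers) to produce $\gamma \in \overline{A}^\times$ with $\gamma^p$ a $G_K$-invariant representative of the class and $\sigma(\gamma(Q)) = e_p(\xi_\sigma,\sigma Q)\,\gamma(\sigma Q)$, and then observe that bilinearity of the Weil pairing makes the obstruction cocycle cancel in the ratio $\gamma(S)\gamma(T)/\gamma(S+T)$, forcing that ratio to be $G_K$-equivariant as a function of $(S,T)$ and hence to lie in $K(S,T)^\times$. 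Your check that the conclusion is independent of the representative of $\alpha$ modulo $(A^\times)^p$ is also needed (since the statement evaluates a representative at points) and is handled correctly: an equivariant $c \in A^\times$ automatically has $c(S), c(T), c(S+T) \in K(S,T)^\times$. This cocycle-trivialisation argument is essentially the one in the cited reference, so your proof should be regarded as matching the intended one rather than diverging from it.
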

\begin{proof}
See \cite[Lemma 3.8]{algebra}. 
\end{proof}

Let $E/K$ be an elliptic curve and $G$ the image of the mod $p$ Galois
representation $\overline{\rho}_{E,p} : G_K \to \GL(E[p])$. Fixing a
basis $S,T$ for $E[p]$, this is a subgroup of $\GL_2(\Z/p\Z)$. In the
next two sections we consider two specific possibilities for $G$,
which we call the {\em $\mu_p$-nonsplit} and {\em $\Z/p\Z$-nonsplit}
cases. The case where $E[p]$ splits as $\mu_p \times \Z/p\Z$ is
significantly easier, as described in~\cite{McCallum}, \cite{ctpps}.

\subsection{$\mu_p$-nonsplit case}
\label{sec:mup}
We consider $E/K$ an elliptic curve whose mod~$p$ Galois
representation has image
\[ G = \left\{ \begin{pmatrix} * & * \\ 0 & 1 \end{pmatrix} \right\}
\subset \GL_2(\Z/p\Z) \] generated by $\sigma =
(\begin{smallmatrix}1&1\\0&1\end{smallmatrix})$ and $\tau =
(\begin{smallmatrix}g&0\\0&1\end{smallmatrix})$ where $g$ is a
primitive root mod $p$. Thus we have a basis $S,T$ of $E[p]$ such that
\begin{align*}
\sigma(S)&=S&\tau(S)&=gS\\
\sigma(T)&=S+T & \tau(T)&=T.
\end{align*}
Note that $\tau\sigma=\sigma^g\tau$. Let $L_1=K(S)=K(\zeta_p)$,
$L_2=K(T)$ and $M=K(E[p])$.
\begin{align*}
  \xymatrix{
    &&M\ar@{-}[ld]_{\langle\sigma\rangle}\ar@{-}[rd]^{\langle\tau\rangle}&& \\
    \ar@{-}@/^/[u]^{C_p}& L_1 \ar@{-}[rd]_{p-1} & & L_2\ar@{-}[ld]^p&\\
    && K && \ar@{-}@/_/[uu]_{G} }
\end{align*}

Recall that $p$ is an odd prime.  Let $\chi_\cyc : G_K \to
(\Z/p\Z)^\times$ be the cyclotomic character.  Then for $N$ a
$(\Z/p\Z)[G_K]$-module we write $N^{(i)}$ for the eigenspace where
$G_K$ acts as $\chi_\cyc^i$.

\begin{Theorem}\label{explicitmu3} We have $H^1(K,E[p]) \isom H$,
  where $H$ is the group of pairs $(a,b)\in
  (L_1^\times/(L_1^\times)^p)^{(1)} \times L_2^\times/(L_2^\times)^p$
  satisfying $N_{L_2/K}(b)\in (K^\times)^p$ and $\sigma(b)/(a b) \in
  (M^\times)^p$.
\end{Theorem}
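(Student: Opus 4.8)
The plan is to make the Schaefer--Stoll isomorphism of Theorem~\ref{TheoremSS} explicit in this case, using the algebra $A'$ of $E[p]\setminus\{\mathcal{O}\}$ in place of $A$. Writing a nonzero point as $xS+yT$, the generators act by $\sigma\colon(x,y)\mapsto(x+y,y)$ and $\tau\colon(x,y)\mapsto(gx,y)$, so the $T$-coordinate $y$ is a $G$-invariant. First I would read off the orbits: there is one orbit $\{xS:x\neq0\}$ with residue field $L_1=K(\zeta_p)$, and for each $j\in\{1,\dots,p-1\}$ an orbit $\{xS+jT:x\in\Z/p\Z\}$ with residue field $K(jT)=L_2$. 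Hence $A'\isom L_1\times L_2^{\,p-1}$, and I identify $\alpha$ with the tuple of its values on the representatives $S,T,2T,\dots,(p-1)T$. The same bookkeeping for the set $\Lambda$, parametrising a line not through $\mathcal{O}$ by $\alpha x+\beta y=1$, produces the $p-1$ fixed ``horizontal'' lines $y=c$ together with a single free orbit on the lines with $\alpha\neq0$, so that $B\isom K^{\,p-1}\times M$.

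Next I would impose $\ker(g-\sigma_g)$. Since $gS=\tau(S)$, the automorphism $\sigma_g$ acts on the $L_1$-factor as the field automorphism $\tau$, and as $\chi_\cyc(\tau)=g$ the condition $\sigma_g(\alpha)\equiv\alpha^g$ says exactly that $a:=\alpha(S)$ lies in the eigenspace $(L_1^\times/(L_1^\times)^p)^{(1)}$. On the $L_2$-factors $\sigma_g$ carries the $j$-th coordinate to the $gj$-th, so the same condition forces $\alpha(gj\,T)\equiv\alpha(jT)^g$; since $g$ is a primitive root this recursion is solved by $\alpha(jT)\equiv b^{\,j}$, where $b:=\alpha(T)$. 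In this way $\alpha\mapsto(a,b)$ identifies $\ker(g-\sigma_g)$ with the full product $(L_1^\times/(L_1^\times)^p)^{(1)}\times L_2^\times/(L_2^\times)^p$, and a short computation using $\tau(S)=gS$ extends this to $\alpha(iS)\equiv a^i$.

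It remains to see that intersecting with $\ker(\overline u)$ cuts out precisely the two conditions defining $H$, and I would prove the two inclusions separately. For a fixed line $y=c$ the product $\prod_{P\in\ell}\alpha(P)$ runs over the $G$-orbit of $cT$ and equals $N_{L_2/K}(\alpha(cT))=N_{L_2/K}(b)^{\,c}$; triviality of this value in the corresponding $K$-factor of $B$ for all $c$ is equivalent to $N_{L_2/K}(b)\in(K^\times)^p$, the first defining condition. To obtain the second condition I would invoke Lemma~\ref{algebra-lemma}: for $\alpha$ in the image of $\overline w_p$ and the pair $(S,T)$ it gives $\alpha(S+T)\equiv\alpha(S)\alpha(T)\pmod{(M^\times)^p}$, and since $S+T=\sigma(T)$ forces $\alpha(S+T)=\sigma(b)$, this reads $\sigma(b)/(ab)\in(M^\times)^p$. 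Thus every class maps into $H$.

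For the reverse inclusion I would start from a pair $(a,b)\in H$, build $\alpha$ by the recipe above (which already places it in $\ker(g-\sigma_g)$), and check membership in $\ker(\overline u)$. Here the second defining relation $\sigma(b)\equiv ab$ is the crucial input: applying $\sigma$ repeatedly and using $\sigma(a)=a$, since $a\in L_1$ is fixed by $\sigma$, yields $\sigma^k(b)\equiv a^k b$, whence $\alpha(iS+jT)\equiv a^i b^j$ at every point. This makes $\alpha$ fully multiplicative, so for each line the product $\prod_{P\in\ell}\alpha(P)$ collapses to $\alpha$ evaluated at the sum of the points of $\ell$; as the points of any line not through $\mathcal{O}$ sum to $\mathcal{O}$, this product is $\alpha(\mathcal{O})=1$. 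Together with the norm condition, which handles the horizontal lines, this shows $\alpha\in\ker(\overline u)$, hence $\alpha$ lies in the Schaefer--Stoll kernel, and the two inclusions give $H^1(K,E[p])\isom H$. The main obstacle, and the reason Lemma~\ref{algebra-lemma} is indispensable, is precisely the appearance of the clean relation $\sigma(b)\equiv ab$: the single free-orbit line product does \emph{not} imply it by formal linear algebra in $M^\times/(M^\times)^p$ (one can check that this already fails for $p=5$), so one genuinely needs the multiplicativity of honest cocycle classes to extract the second condition.
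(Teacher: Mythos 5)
Your proof is correct and follows essentially the same route as the paper's: the same orbit computations giving $A'\isom L_1\times L_2^{p-1}$ and $B\isom K^{p-1}\times M$, the same description of $\ker(g-\sigma_g)$ by pairs $(a,b)$, Lemma~\ref{algebra-lemma} applied to the pair $(S,T)$ for the inclusion of the image in $H$, and, for the reverse inclusion, the relation $\sigma^k(b)\equiv a^kb$, which is exactly the paper's ``easy calculation'' verifying the free-orbit line condition~\eqref{cond2} (your phrasing via ``the points of a line sum to $\mathcal{O}$'' is a pleasant way to organise it).

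The one inaccuracy is your closing parenthetical: it is not true that for $p=5$ the condition~\eqref{cond2} fails to imply $\sigma(b)\equiv ab$ by formal manipulation. Write $M^\times/(M^\times)^p$ additively as a module over $\F_p[\Gal(M/L_1)]\isom\F_p[s]/(s^p)$, where $s=\sigma-1$. Then $\sum_{i=1}^{p-1}i\,\sigma^{i^{-1}}=\sum_{j=1}^{p-1}j^{-1}\sigma^{j}=-\log(1+s)$, since the coefficient of $s^k$ is $\sum_j j^{-1}\binom{j}{k}=k^{-1}\binom{p-1}{k}\equiv(-1)^k k^{-1}\pmod p$; so~\eqref{cond2} reads $a=\log(1+s)\cdot b$. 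Applying $s$ and using $sa=0$ (automatic, as $a$ lies in $L_1$, which $\sigma$ fixes) gives $s^2b=0$, because $s\log(1+s)$ is $s^2$ times a unit of $\F_p[s]/(s^p)$; substituting back kills all higher-order terms and leaves $a=sb$, i.e.\ $\sigma(b)\equiv ab\pmod{(M^\times)^p}$. Thus for every odd $p$, not just $p=3$, the second condition defining $H$ can be extracted from~\eqref{cond2} purely formally, and Lemma~\ref{algebra-lemma} -- as used both by you and by the paper -- is a convenience rather than a necessity. This slip is harmless, since your proof never relies on the claim.
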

\begin{proof}
  We use the description of $H^1(K,E[p])$ in Theorem~\ref{TheoremSS}
  as the intersection of $\ker(g-\sigma_g)$ and $\ker(\overline{u})$.

  There are $p$ orbits for the action of $G_K$ on $E[p] \setminus \{
  \mathcal O \} $, with representatives $S$ and $iT$ for
  $i\in\{1,\ldots, p-1\}$.  Therefore $H^1(K,E[p])\subset
  A'^\times/(A'^\times)^p$ where
  \[ A'\isom L_1\times\underbrace{ L_2\times \ldots \times
    L_2}_{p-1}. \] Moreover $(a,b_1,b_2,\ldots, b_{p-1}) \in
  A'^\times/(A'^\times)^p$ belongs to $\ker(g-\sigma_g)$ if and only
  if $a \in (L_1^\times/(L_1^\times)^p)^{(1)}$ and $b_i \equiv b_1^i
  \mod{(L_2^\times)^p}$ for all $i\in\{1,\ldots, p-1\}$.  Accordingly
  we represent elements of $H^1(K,E[p])$ as pairs $(a,b)$ where $b =
  b_1$.

  We consider the action of $G_K$ on the set $\Lambda$ of affine lines
  in $E[p]$ missing the origin.  There are $p-1$ orbits of one line
  each, given by $\ell_1, \ldots, \ell_{p-1}$ where
  \[ \ell_i = \{ iT, S+iT, 2S + iT, \ldots, (p-1)S + iT \}, \] and
  just one further orbit of size $p^2-p$ represented by
  \[ m = \{ S, S+T, S+2T, \ldots, S+(p-1)T \}. \] Thus the \'etale
  algebra $B$ associated to $\Lambda$ is given by
  \[B \isom \underbrace{K\times\cdots\times K}_{p-1}\times M.\]

  A pair $(a,b)$ corresponding to $\alpha \in A^\times$ represents an
  element in $\ker(\overline{u})$ if and only if
  \[ N_{L_2/K} (b)^i \equiv \prod_{P \in \ell_i} \alpha(P) \equiv 1
  \mod{(K^\times)^p} \] for all $i \in \{1,2,\ldots,p-1\}$, and
  \begin{equation}
    \label{cond2}
    a \prod_{i=1}^{p-1} \sigma^{i^{-1}} (b)^{i} \equiv 
    \prod_{P \in m} \alpha(P) \equiv 1 \mod{(M^\times)^p} 
  \end{equation}
  where inverses are taken in $(\Z/p\Z)^\times$. This proves the
  theorem when $p=3$.

  In general~\eqref{cond2} may be simplified as follows.  First
  Lemma~\ref{algebra-lemma} tells us that for an element in the image
  of $H^1(K,E[p])$ we have
  \begin{equation}
    \label{cond:cfoss}
    \frac{\sigma(b)}{ab} = \frac{\alpha(S+T)}{\alpha(S) \alpha(T)}
    \in (M^\times)^p. 
  \end{equation}
  Conversely, if we assume~\eqref{cond:cfoss} then~\eqref{cond2}
  follows by an easy calculation.
\end{proof}

Let $\phi : E \to E'$ be the isogeny with kernel generated by $S$.
The first row of the following diagram is the long exact
sequence~\eqref{H1seq}. Since $E[\phi] \isom \mu_p$ as Galois modules,
we have $H^1(K,E[\phi]) \isom K^\times/(K^\times)^p$ and
$H^2(K,E[\phi]) \isom \Br(K)[p]$. The other two vertical maps are
given by Theorem~\ref{explicitmu3} and~\eqref{H1isog}.
\begin{equation}\label{importantDiagram}
\begin{aligned}
  \xymatrix{
    H^1(K,E[\phi])\ar[d]^-{\isom}\ar[r]^ {\iota_{*}}& 
    H^1(K,E[p]) \ar[d]^{\isom} \ar[r]^ {\phi_{*}} & 
    H^1(K,E'[\phihat]) \ar[d]^{\isom} \ar[r]^-{\delta_2} & 
    H^2(K,E[\phi])\ar[d]^{\isom}   \\
    K^\times/(K^\times)^p \ar[r]^-{f} & H \ar[r]^-{g} &
    (L_1^\times/(L_1^\times)^p)^{(1)} \ar[r]^-{\Delta} & \Br(K)[p] }
\end{aligned}
\end{equation}
The maps $f$, $g$ and $\Delta$ are defined so that this diagram
commutes. We now describe these maps explicitly.
\begin{Lemma}
\label{lem:fg}
We have $f:b\mapsto (1,b)$ and $g:(a,b) \mapsto a$.
\end{Lemma}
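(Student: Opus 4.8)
The plan is to extract $f$ and $g$ directly from the commutativity of \eqref{importantDiagram}, using that each vertical isomorphism there is (the restriction of) one of the maps $\overline{w}_\phi$, $\overline{w}_p$, $\overline{w}_\phihat$ of \eqref{GeneralDiagram} followed by reading off values on the \'etale algebras. Concretely, following the proof of Theorem~\ref{explicitmu3}, an element $x_1 \in H^1(K,E[p])$ is recorded as the pair $(a,b) = (\overline{w}_p(x_1)(S),\,\overline{w}_p(x_1)(T))$; the isomorphism $H^1(K,E'[\phihat]) \isom (L_1^\times/(L_1^\times)^p)^{(1)}$ is $x \mapsto \overline{w}_\phihat(x)(S)$, the value at $S$ of the class in the \'etale algebra $A_2$ of $E[\phi]$; and the isomorphism $H^1(K,E[\phi]) \isom K^\times/(K^\times)^p$ is $z \mapsto \overline{w}_\phi(z)(\phi T)$, where $\phi T$ generates $E'[\phihat]$. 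Granting these identifications, the two halves of the lemma reduce to computing the bottom-row pullbacks $\phi^*$ and $\iota^*$ of \eqref{GeneralDiagram} in these coordinates, since commutativity of that diagram gives $\overline{w}_p \circ \iota_* = \phi^* \circ \overline{w}_\phi$ and $\overline{w}_\phihat \circ \phi_* = \iota^* \circ \overline{w}_p$.

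For $g$ I would argue as follows. The map $\iota : E[\phi] \hookrightarrow E[p]$ is the inclusion, so $\iota^*$ is restriction of functions to $E[\phi] = \langle S \rangle$; in particular $(\iota^*\alpha)(S) = \alpha(S)$. Taking $\alpha = \overline{w}_p(x_1)$ with $x_1$ recorded as $(a,b)$, I get $g(a,b) = \overline{w}_\phihat(\phi_* x_1)(S) = (\iota^*\alpha)(S) = \alpha(S) = a$, which is the first projection as claimed.

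For $f$ I would compute $\phi^*$ on coordinates. Since $\phi : E[p] \to E'[\phihat]$ sends $S \mapsto \mathcal O$ and $T \mapsto \phi T$, we have $(\phi^*\gamma)(S) = \gamma(\mathcal O)$ and $(\phi^*\gamma)(T) = \gamma(\phi T)$. Put $\gamma = \overline{w}_\phi(z)$ for $z \in H^1(K,E[\phi])$ with Kummer class $b \in K^\times/(K^\times)^p$. The $S$-coordinate is $\gamma(\mathcal O)$, which is trivial because $\overline{w}_\phi$ is built from $w_\phi(P) = (Q \mapsto e_\phi(P,Q))$ and the Weil pairing against $\mathcal O$ is $1$; hence the first entry of $f(b)$ is $1$. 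The $T$-coordinate is $\gamma(\phi T) = \overline{w}_\phi(z)(\phi T)$, which is by definition the Kummer class $b$. Thus $f(b) = (1,b)$. To finish I would check the target is correct: $1 \in (L_1^\times/(L_1^\times)^p)^{(1)}$ trivially, and $(1,b) \in H$ since $N_{L_2/K}(b) = b^p \in (K^\times)^p$ and $\sigma(b)/(1 \cdot b) = 1 \in (M^\times)^p$, both holding because $b$ comes from $K^\times$.

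I expect the main obstacle to be pinning down the normalisation so that the $T$-coordinate of $\phi^*\overline{w}_\phi(z)$ is exactly $b$ rather than some power $b^i$. This forces a compatible choice of the identification $E[\phi] \isom \mu_p$ defining the left-hand vertical map of \eqref{importantDiagram}: I would fix once and for all the primitive root $\zeta_p = e_\phi(S, \phi T)$ and take the Kummer isomorphism $H^1(K,E[\phi]) \isom K^\times/(K^\times)^p$, the coordinate $b$, and the evaluation $\overline{w}_\phi(z)(\phi T)$ all with respect to this same $\zeta_p$. With these choices aligned, no spurious scalar in $(\Z/p\Z)^\times$ appears, and the computations above give $f$ and $g$ on the nose.
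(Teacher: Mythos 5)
Your proof is correct and takes essentially the same route as the paper: the paper's own proof likewise observes that the vertical maps of \eqref{importantDiagram} are those of \eqref{GeneralDiagram} composed with the evaluation projections $A_1 \to K$, $\alpha \mapsto \alpha(\phi T)$; $A \to L_1 \times L_2$, $\alpha \mapsto (\alpha(S),\alpha(T))$; $A_2 \to L_1$, $\alpha \mapsto \alpha(S)$, and then identifies $f$ and $g$ with the pullbacks $\phi^*$ and $\iota^*$ in these coordinates. Your explicit computations (using $\phi S = \mathcal{O}$, the triviality of the Weil pairing against $\mathcal{O}$, and restriction to $E[\phi]$), together with your care about the normalisation $\zeta_p = e_\phi(S,\phi T)$, simply spell out the step the paper declares ``easy to see''.
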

\begin{proof}
  The second rows in~\eqref{GeneralDiagram}
  and~\eqref{importantDiagram} differ in that we have applied
  projection maps
  \begin{align*}
    A_1 & \to K & A & \to L_1 \times L_2 & A_2 & \to L_1 \\
    \alpha & \mapsto \alpha(\phi T) & \alpha & \mapsto
    (\alpha(S),\alpha(T)) & \alpha & \mapsto \alpha(S)
  \end{align*}
  From this it is easy to see that the maps $f$ and $g$ in the
  statement of the lemma do indeed correspond to pull back by $\phi$
  and $\iota$.
\end{proof}

Our description of $\Delta$ will be in terms of cyclic algebras, so we
introduce these first.  Let $\chi \in \Hom(G_K,\Z/p\Z)$ and $b \in
K^\times$. If $\chi$ is non-trivial then it factors via an isomorphism
$\Gal(L/K) \isom \Z/p\Z$; $\gamma \mapsto 1$, for some degree $p$
cyclic extension $L/K$. The cyclic algebra $A = A(\chi,b)$ is the
$K$-algebra $\{ \sum_{i=0}^{p-1} a_i v^i : a_i \in L \}$ with
multiplication determined by $v^p = b$ and
\begin{equation}
\label{gammaprop}
vx = \gamma(x)v
\end{equation} for all $x \in L$. % and $v^p = b$. 
This is a central simple algebra of dimension $p^2$. We write
$(\chi,b)$ for its class in $\Br(K)$. This construction is compatible
with the cup product, in the sense that the following diagram commutes.
\begin{equation}
\label{cyclic-cup}
\begin{aligned}
  \xymatrix{ H^1(K,\Z/p\Z) \ar@<-2.5em>[d]^{\isom} \times H^1(K,\mu_p)
    \ar[r]^-{\cup} \ar@<3.8em>[d]^{\isom}
    & H^2(K,\mu_p) \ar[d]^{\isom} \\
    \Hom(G_K,\Z/p\Z) \times K^\times/(K^\times)^p \ar[r]^-{(~,~)} &
    \Br(K)[p] \rlap{.}}
\end{aligned}
\end{equation}

The next two lemmas are well known. See for example \cite[Section
4.7]{csagc}. We include the proofs since they are needed for our
algorithms.
\begin{Lemma}
\label{lem:cyclic}
Let $A = A(\chi,b)$ be a cyclic algebra.  Then $A \isom \Mat_p(K)$ if
and only if $b$ is a norm for $L/K$.
\end{Lemma}
\begin{proof}
  If $x \in L$ then by~\eqref{gammaprop} we have $(x v)^p = N_{L/K}(x)
  v^p$. So if $b$ is a norm then $A(\chi,b) \isom A(\chi,1)$. We then
  have the trivialisation
  \begin{align*}
    A(\chi,1) &\isom \End_K(L) \isom \Mat_p(K) \\
    \sum a_i v^i & \mapsto (x \mapsto \sum a_i \gamma^i(x) ).
  \end{align*}

  Conversely, suppose we are given an isomorphism $\iota : A \isom
  \Mat_p(K)$. We fix a non-zero vector $e_1 \in K^p$. Since $L$ is a
  field, the map of $K$-vector spaces $L \to K^p ; \, x \mapsto
  \iota(x) e_1$ is injective. By a dimension count it is also
  surjective.  Making this identification, we now have $\iota : A
  \isom \End_K(L)$ with $\iota(x)y = xy$ for all $x,y \in L$. Let $v_1
  \in A$ with $\iota(v_1) = \gamma$. Since~\eqref{gammaprop} is
  satisfied by both $v$ and $v_1$ it follows that $\xi = v_1^{-1} v$
  commutes with every element of $L$, and hence is in $L$. Finally
  $N_{L/K}(\xi) = (v_1 \xi)^p = v^p = b$.
\end{proof}

\begin{Lemma}
  \label{lem:switch}
  Let ${\mathbb K}$ be a field containing a primitive $p$th root of
  unity $\zeta_p$. Let $a,b \in {\mathbb K}^\times$ and let $A$ be the
  ${\mathbb K}$-algebra generated by $x$ and $y$ subject to the
  relations $x^p=a$, $y^p=b$ and $xy = \zeta_p y x$.  Then the
  following are equivalent.
  \begin{enumerate}
  \item $a$ is a norm for ${\mathbb K}(\sqrt[p]{b})/{\mathbb K}$.
  \item $b$ is a norm for ${\mathbb K}(\sqrt[p]{a})/{\mathbb K}$.
  \item $A \isom \Mat_p({\mathbb K})$.
  \end{enumerate}
\end{Lemma}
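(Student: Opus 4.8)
The plan is to identify the symbol algebra $A$ with a cyclic algebra and then invoke Lemma~\ref{lem:cyclic}. I would first establish the equivalence (i) $\Leftrightarrow$ (iii) and afterwards obtain (ii) $\Leftrightarrow$ (iii) by a symmetry argument that swaps the roles of $x$ and $y$.

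For the main case, suppose $b \notin ({\mathbb K}^\times)^p$. Since $p$ is prime, the polynomial $t^p - b$ is irreducible over ${\mathbb K}$, so $L := {\mathbb K}[y] \isom {\mathbb K}[t]/(t^p-b) = {\mathbb K}(\sqrt[p]{b})$ is a degree $p$ extension, and it is cyclic because $\zeta_p \in {\mathbb K}$. The relation $xy = \zeta_p yx$ gives $xyx^{-1} = \zeta_p y$, so conjugation by $x$ induces the automorphism $\gamma \in \Gal(L/{\mathbb K})$ with $\gamma(y) = \zeta_p y$; as $\zeta_p$ is primitive, $\gamma$ generates $\Gal(L/{\mathbb K})$. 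Thus $x$ satisfies $xz = \gamma(z)x$ for all $z \in L$ together with $x^p = a$, which are precisely the defining relations of the cyclic algebra $A(\chi,a)$, where $\chi$ is the character attached to $\gamma$.

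To make this identification precise, I would argue that the three defining relations let every element of $A$ be rewritten as a ${\mathbb K}$-linear combination of the $p^2$ monomials $y^j x^i$ with $0 \le i,j \le p-1$, so $\dim_{\mathbb K} A \le p^2$. On the other hand, the elements $v$ and $\sqrt[p]{b} \in L$ in $A(\chi,a)$ satisfy the same three relations, so the generators-and-relations presentation yields a surjection $A \twoheadrightarrow A(\chi,a)$; since $\dim_{\mathbb K} A(\chi,a) = p^2$, this surjection is an isomorphism and $A \isom A(\chi,a)$. Lemma~\ref{lem:cyclic} then gives $A \isom \Mat_p({\mathbb K})$ if and only if $a$ is a norm for $L/{\mathbb K} = {\mathbb K}(\sqrt[p]{b})/{\mathbb K}$, which is (i) $\Leftrightarrow$ (iii). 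In the degenerate case $b \in ({\mathbb K}^\times)^p$, condition (i) holds trivially since ${\mathbb K}(\sqrt[p]{b}) = {\mathbb K}$, and I would check directly that the split symbol algebra $A$ is isomorphic to $\Mat_p({\mathbb K})$ (after rescaling $y$ so that $y^p = 1$), so (iii) holds as well. Finally, for (ii) $\Leftrightarrow$ (iii): the defining relations are preserved under interchanging $x$ and $y$ provided $\zeta_p$ is replaced by $\zeta_p^{-1}$, since $yx = \zeta_p^{-1}xy$; as $\zeta_p^{-1}$ is again a primitive $p$th root of unity, the argument above applied with $(x,y,a,b,\zeta_p)$ replaced by $(y,x,b,a,\zeta_p^{-1})$ shows that $A \isom \Mat_p({\mathbb K})$ if and only if $b$ is a norm for ${\mathbb K}(\sqrt[p]{a})/{\mathbb K}$.

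The main obstacle I anticipate is the clean identification $A \isom A(\chi,a)$: one must verify that $y$ genuinely generates a copy of the field $L$ inside $A$ — so that $t^p - b$ is its minimal polynomial and the monomials $y^j x^i$ are linearly independent rather than collapsing — which is exactly why the dimension bound $\dim_{\mathbb K} A \le p^2$ must be compared against the honest cyclic algebra of dimension $p^2$. Keeping track of which primitive root of unity appears in the conjugation relation, and isolating the degenerate case $b \in ({\mathbb K}^\times)^p$ in which $L$ fails to be a field, are the remaining points that require care.
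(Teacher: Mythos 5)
Your proof is correct and takes essentially the same route as the paper: the paper's proof is exactly the observation that $A$ is a cyclic algebra so that Lemma~\ref{lem:cyclic} applies (the paper gets (ii) $\Leftrightarrow$ (iii) this way, you get (i) $\Leftrightarrow$ (iii)), with the remaining equivalence obtained by the $x \leftrightarrow y$ symmetry. The details you supply --- the generators-and-relations dimension count identifying $A$ with a cyclic algebra, the degenerate case $b \in ({\mathbb K}^\times)^p$, and the replacement of $\zeta_p$ by $\zeta_p^{-1}$ under the symmetry --- are precisely the points the paper's two-line proof leaves implicit.
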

\begin{proof}
  The equivalence of (ii) and (iii) is a special case of
  Lemma~\ref{lem:cyclic}.  By symmetry this also gives the equivalence
  of (i) and (iii).
\end{proof}

We also need the following fact about cup products.
\begin{Lemma}
\label{lem:deltas}
Let $0 \to A_1 \to A_2 \to \Z/p\Z \to 0$ be a short exact sequence of
$(\Z/p\Z)[G_K]$-modules. Then the connecting maps in the long exact
sequence
\[ \Z/p\Z \stackrel{\delta_1}{\longrightarrow} H^1(K,A_1) \rightarrow
H^1(K,A_2) \rightarrow H^1(K,\Z/p\Z)
\stackrel{\delta_2}{\longrightarrow} H^2(K,A_1) \] are related by
$\delta_2(b) = \delta_1(1) \cup b$.
\end{Lemma}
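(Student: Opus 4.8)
The plan is to prove the identity directly at the level of explicit cochains, exploiting the fact that $A_1$ and $A_2$ are $\F_p[G_K]$-modules in order to choose a particularly convenient (additive) set-theoretic section of the surjection $\pi : A_2 \to \Z/p\Z$. Throughout, the cup product in question is the one induced by the pairing $A_1 \times \Z/p\Z \to A_1$, $(m,n) \mapsto nm$, under the identification $A_1 \otimes_{\Z} \Z/p\Z \isom A_1$, which is valid because $A_1$ is $p$-torsion; with this understood, $\delta_1(1) \in H^1(K,A_1)$ and $b \in H^1(K,\Z/p\Z)$ pair into $H^2(K,A_1)$.

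First I would fix a lift $s_0 \in A_2$ of the generator $1 \in \Z/p\Z$ and define a section $s : \Z/p\Z \to A_2$ by $s(n) = n s_0$. This makes sense because $A_2$ is an $\F_p$-vector space, and, crucially, it is \emph{additive}: $s(m+n) = s(m) + s(n)$. Since $G_K$ acts trivially on $\Z/p\Z$, the element $\sigma(s_0) - s_0$ lies in $\ker\pi = A_1$ for every $\sigma \in G_K$, and the standard recipe for the connecting map $H^0(K,\Z/p\Z) \to H^1(K,A_1)$ shows that the $1$-cocycle $c : \sigma \mapsto \sigma(s_0) - s_0$ represents $\delta_1(1)$.

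Next I would compute $\delta_2(b)$ in the same terms. Represent $b \in H^1(K,\Z/p\Z) = \Hom(G_K,\Z/p\Z)$ by a homomorphism $b : G_K \to \Z/p\Z$, lift it to the $A_2$-valued cochain $\sigma \mapsto s(b(\sigma))$, and apply the coboundary. The resulting $A_1$-valued $2$-cocycle is
\[ (\sigma,\tau) \longmapsto \sigma\bigl(s(b(\tau))\bigr) - s\bigl(b(\sigma\tau)\bigr) + s\bigl(b(\sigma)\bigr). \]
Because $b$ is a homomorphism we have $b(\sigma\tau) = b(\sigma) + b(\tau)$, so additivity of $s$ cancels the last two terms against part of the first, leaving $\sigma(s(b(\tau))) - s(b(\tau))$. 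Since $b(\tau) \in \Z/p\Z$ is a scalar and $s(b(\tau)) = b(\tau)s_0$, this equals $b(\tau)\bigl(\sigma(s_0) - s_0\bigr) = b(\tau)\,c(\sigma)$. On the other hand, the standard cup-product formula for two $1$-cocycles, together with the trivial Galois action on $\Z/p\Z$, gives $(c \cup b)(\sigma,\tau) = c(\sigma)\cdot \sigma(b(\tau)) = c(\sigma)\,b(\tau)$. The two $2$-cocycles are literally equal, so $\delta_2(b) = \delta_1(1) \cup b$.

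The computation is routine once the section is chosen well; the only thing to be careful about is matching conventions, namely the sign conventions in the coboundary and in the two connecting homomorphisms, and the precise cup-product formula and its underlying pairing. Conceptually this is just the concrete incarnation of the statement that connecting maps are given by cup product with the class of the extension, but the whole point of the additive section $s(n) = ns_0$ is that it makes the two cocycles agree on the nose rather than merely up to an explicit coboundary, which removes any remaining ambiguity and is therefore the main technical step.
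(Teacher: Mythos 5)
Your proof is correct and is essentially the paper's own argument: the paper also fixes a lift $x \in A_2$ of $1$, implicitly uses the additive section $n \mapsto nx$ to lift the cocycle $(b_\sigma)$ to $(b_\sigma x)$, and computes $\delta_2(b)_{\sigma\tau} = \sigma(b_\tau x) - b_{\sigma\tau}x + b_\sigma x = b_\tau(\sigma x - x) = (\delta_1(1)\cup b)_{\sigma\tau}$, which is exactly your cochain calculation. Your write-up just makes explicit the conventions (the pairing $A_1 \times \Z/p\Z \to A_1$ and the section) that the paper leaves implicit.
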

\begin{proof} Let $b$ be represented by a cocycle $(b_\sigma)$.  Let
  $x \in A_2$ with $x \mapsto 1$. Then
  \[ \delta_2(b)_{\sigma \tau} = \sigma (b_\tau x) - b_{\sigma \tau} x
  + b_{\sigma} x = b_\tau (\sigma x - x) = (\delta_1(1) \cup
  b)_{\sigma \tau}.  \] 
  Alternatively, this is \cite[Proposition 3.4.8]{csagc} with $A_3 = B
  = \Z/p\Z$.
\end{proof}

We are now ready to describe the map $\Delta$
in~\eqref{importantDiagram}.  We have $E'(K)[\phihat] \isom \Z/p\Z$
generated by $\phi(T)$. The image of $\phi(T)$ under the connecting
map in the long exact sequence associated to~\eqref{ses} is an element
$\beta \in H^1(K,E[\phi]) \isom K^\times/(K^\times)^p$.  This is a
Kummer generator for the extension $M/L_1$.  We write $\chi_a \mapsto
a$ for the natural isomorphism (depending on a choice of primitive
$p$th root of unity)
\begin{equation}
\label{kum:ab}
  \Hom(G_K,\Z/p\Z) \isom (L_1^\times/(L_1^\times)^p)^{(1)}. 
\end{equation}
By Lemma~\ref{lem:deltas} and~\eqref{cyclic-cup}, the map $\Delta$
in~\eqref{importantDiagram} is given by $\Delta : a \mapsto (\chi_a,
\beta)$, at least up to multiplication by a fixed element of
$(\Z/p\Z)^\times$, which we have no need to make explicit.

Since the diagram~\eqref{importantDiagram} commutes, and the first row
is exact, the second row is also exact.  In particular, if $a \in
(L_1^\times/(L_1^\times)^p)^{(1)}$ with $\Delta(a) = 0$ then there
exists $b \in L_2^\times/(L_2^\times)^p$ such that $(a,b) \in H$.  We
show that making the lift from $H^1(K,E'[\phihat])$ to $H^1(K,E[p])$
explicit comes down to solving a norm equation.

\begin{Theorem}\label{lemmu3}
  Let $E/K$ be an elliptic curve with $p$-torsion of type
  $\mu_p$-nonsplit.  Let $a\in (L_1^\times /(L_1^\times)^p)^{(1)}$. If
  $\Delta(a)=0$ then there exists $\xi \in M$ satisfying
  $N_{M/L_1}(\xi)=a$, and we may lift $a$ to $(a,b) \in H$ where
  \[ b=N_{M/L_2} \left( \prod_{i=1}^{p-1} \sigma^i(\xi)^i \right). \]
\end{Theorem}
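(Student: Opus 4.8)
The plan is to prove the two assertions of the theorem separately. First I would show that the vanishing of $\Delta(a)$ forces the norm equation $N_{M/L_1}(\xi)=a$ to be solvable, and then I would check that the element $b$ built from such a $\xi$ by the stated formula satisfies the two congruences defining $H$ in Theorem~\ref{explicitmu3}, so that $(a,b)\in H$; that this pair lifts $a$ is then immediate from Lemma~\ref{lem:fg}. Note that exactness of the bottom row of~\eqref{importantDiagram} already guarantees that \emph{some} lift exists once $\Delta(a)=0$, so the real content is the closed formula for $b$.

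For the first assertion I would exploit the description $\Delta(a)=(\chi_a,\beta)$. Restricting to $L_1$ and using that restriction commutes with cup product, together with the fact (built into the definition of~\eqref{kum:ab}) that $\chi_a$ restricts over $L_1$ to the Kummer class of $a$, the class $\res_{L_1}\Delta(a)$ becomes exactly the symbol $L_1$-algebra of Lemma~\ref{lem:switch} attached to the pair $(a,\beta)$, with $\mu_p\subset L_1$ now available. Since $\beta$ is a Kummer generator for $M/L_1$ we have $M=L_1(\sqrt[p]{\beta})$. Hence if $\Delta(a)=0$ then this symbol algebra is split, and Lemma~\ref{lem:switch} gives that $a$ is a norm for $M/L_1$, i.e.\ there is $\xi\in M$ with $N_{M/L_1}(\xi)=a$. (When $a\in(L_1^\times)^p$ the character $\chi_a$ is trivial and one simply takes $\xi\in L_1$ with $\xi^p=a$.)

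For the second assertion, write $\eta=\prod_{i=1}^{p-1}\sigma^i(\xi)^i$, so that $b=N_{M/L_2}(\eta)$. The condition $N_{L_2/K}(b)\in(K^\times)^p$ falls out of transitivity of norms: $N_{L_2/K}(b)=N_{M/K}(\eta)=N_{M/K}(\xi)^{\sum_{i=1}^{p-1}i}=N_{M/K}(\xi)^{p(p-1)/2}$, which is a $p$th power in $K^\times$ because $p$ is odd. For the condition $\sigma(b)/(ab)\in(M^\times)^p$ the key computation is
\[ \frac{\sigma(\eta)}{\eta}=\frac{\xi^p}{a}\equiv a^{-1}\pmod{(M^\times)^p}, \]
obtained by reindexing the product and using $N_{M/L_1}(\xi)=\prod_{j=0}^{p-1}\sigma^j(\xi)=a$. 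Since $\sigma$ fixes $L_1$ this iterates to $\sigma^m(\eta)\equiv a^{-m}\eta$. Combining the relation $\sigma\tau^k=\tau^k\sigma^{g^{-k}}$ (from $\tau\sigma=\sigma^g\tau$) with the fact that $a$ lies in the $\chi_\cyc$-eigenspace, so $\tau^k(a)\equiv a^{g^k}$ and hence $\tau^k(a)^{-g^{-k}}\equiv a^{-1}$ modulo $p$th powers, I obtain the uniform estimate $\sigma\tau^k(\eta)\equiv a^{-1}\tau^k(\eta)\pmod{(M^\times)^p}$ for every $k$. Taking the product over $k=0,\dots,p-2$ then yields $\sigma(b)\equiv a^{-(p-1)}b\equiv ab\pmod{(M^\times)^p}$, as required.

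The main obstacle is the first assertion: translating the purely cohomological vanishing $\Delta(a)=(\chi_a,\beta)=0$ into concrete solvability of $N_{M/L_1}(\xi)=a$. The crux is to pass to $L_1$, where $\mu_p$ becomes available, and then to \emph{switch the two slots} of the symbol via Lemma~\ref{lem:switch}, reinterpreting ``$\beta$ cuts out $M$'' as ``$a$ is a norm from $M$''. Once $\xi$ is in hand the second assertion is essentially bookkeeping with the Galois relations; the only subtlety there is tracking exponents modulo $p$ and keeping in mind that $\sigma$ fixes $L_1$ while $\tau$ acts on $a$ through $\chi_\cyc$.
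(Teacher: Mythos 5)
Your proposal is correct and follows essentially the same route as the paper: both deduce the solvability of $N_{M/L_1}(\xi)=a$ from $\Delta(a)=(\chi_a,\beta)=0$ via Lemma~\ref{lem:switch} applied over $L_1$ with $M=L_1(\sqrt[p]{\beta})$, and both verify the two conditions of Theorem~\ref{explicitmu3} by a Galois computation using $\tau\sigma=\sigma^g\tau$, $N_{M/L_1}(\xi)=a$, and the fact that $a$ lies in the $\chi_\cyc$-eigenspace. The only cosmetic differences are that the paper reaches the switch lemma by first applying Lemma~\ref{lem:cyclic} over $K$ (via the degree-$p$ subfield $F$ of $L_1(\sqrt[p]{a})/K$) rather than restricting the Brauer class to $L_1$, and it organizes the congruence $\sigma(b)\equiv ab \pmod{(M^\times)^p}$ as a single double-product manipulation instead of your telescoping identity $\sigma(\eta)/\eta=\xi^p/a$.
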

\begin{proof}
  Let $\chi_a \mapsto a$ under the isomorphism~\eqref{kum:ab}, and let
  $F$ be the fixed field of the kernel of $\chi_a$.  In other words,
  $F/K$ is the degree $p$ subextension of $L_1(\sqrt[p]{a})/K$.
\begin{align*}
  \xymatrix{
    &L_1(\sqrt[p]{a}) \ar@{-}[ld]_{p}\ar@{-}[d]^{p-1} \\
    L_1 = K(\zeta_p) \ar@{-}[d]_{p-1}  & F\ar@{-}[ld]^p \\
    K & }
\end{align*}
If $\Delta(a)=0$ then Lemma~\ref{lem:cyclic} tells us that $\beta$ is
a norm for $F/K$, and hence for $L_1(\sqrt[p]{a})/L_1$. It follows by
Lemma~\ref{lem:switch} that $a$ is a norm for $M/L_1$.

Now let $\xi$ and $b$ be as in the statement of the theorem.  We show
that $(a,b) \in H$ by checking the conditions in
Theorem~\ref{explicitmu3}. First we compute
\[ N_{L_2/K} (b) = N_{M/K} \left( \prod_{i=1}^{p-1} \sigma^i(\xi)^i
\right) = N_{L_1/K}(a)^{p(p-1)/2}. \] Since $p$ is odd this gives
$N_{L_2/K}(b) \in (K^\times)^p$.  Since $\tau \sigma = \sigma^g \tau$
we have
\[ \sigma(b) \equiv \prod_{r=0}^{p-2} \tau^r \prod_{i=0}^{p-1}
\sigma^{i + g^{-r}}(\xi)^i \equiv \prod_{r=0}^{p-2} \tau^r
\prod_{i=0}^{p-1} \sigma^{i}(\xi)^{i - g^{-r}} \mod{(M^\times)^p}.  \]
Then since $N_{M/L_1}(\xi) = a$ and $\tau(a) \equiv a^g
\mod{(L_1^\times)^p}$ we have
\[ b/\sigma(b) \equiv \prod_{r=0}^{p-2} \tau^r N_{M/L_1}(\xi)^{g^{-r}}
\equiv \prod_{r=0}^{p-2} (\tau^r a)^{g^{-r}} \equiv a^{-1}
\mod{(M^\times)^p}. \] Therefore $\sigma(b)/(ab) \in (M^\times)^p$ as
required.
\end{proof}

\subsection{$\Z/p\Z$-nonsplit case}
\label{sec:ZpZ}

We consider $E/K$ an elliptic curve whose mod~$p$ Galois
representation has image
\[ G = \left\{ \begin{pmatrix} 1 & * \\ 0 & * \end{pmatrix} \right\}
\subset \GL_2(\Z/p\Z) \] generated by $\sigma =
(\begin{smallmatrix}1&1\\0&1 \end{smallmatrix})$ and $\tau =
(\begin{smallmatrix}1&0\\0&g\end{smallmatrix})$ where $g$ is a
primitive root mod $p$. Thus we have a basis $S,T$ of $E[p]$ such that
\begin{align*}
\sigma(S)& = S & \tau(S) &= S\\
\sigma(T)& = S+T & \tau(T)& = gT
\end{align*}
Note that $\tau \sigma^g = \sigma \tau$. Let $L_1 = K(\zeta_p)$ and
$M=K(T)=K(E[p])$. Let $L_2$ be the subfield of $M$ fixed by $\tau$.
The diagram of fields is the same as in Section~\ref{sec:mup}.

\begin{Theorem}\label{copycatTheorem}
  We have $H^1(K,E[p])\isom H$, where $H$ is the group of pairs
  $(a,b)\in K^\times/(K^\times)^p \times M^\times/(M^\times)^p$
  satisfying $b^g/\tau(b) \in (M^\times)^p$,
  $N_{M/L_1}(b) %= b\sigma(b)\ldots\sigma^{p-1}(b)
  \in(L_1^\times)^p$ and $\sigma(b)/(a b) \in (M^\times)^p$.
\end{Theorem}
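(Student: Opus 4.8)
The plan is to mirror the proof of Theorem~\ref{explicitmu3}, using the Schaefer-Stoll description in Theorem~\ref{TheoremSS} of $H^1(K,E[p])$ as $\ker(g-\sigma_g)\cap\ker(\overline{u})$ inside $A'^\times/(A'^\times)^p$, and then translating the two kernel conditions into conditions on pairs $(a,b)$. The guiding intuition is that passing from $\mu_p$-nonsplit to $\Z/p\Z$-nonsplit interchanges the roles of the ``point'' orbits and the ``line'' orbits, so the fields attached to them are swapped.

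First I would record the $G$-orbits on $E[p]\setminus\{\mathcal{O}\}$: there are $p-1$ fixed points $S,2S,\ldots,(p-1)S$, each defined over $K$, together with a single orbit of size $p^2-p$ containing $T$, with $K(T)=M$. Hence $A'\isom K^{p-1}\times M$. Evaluating $\ker(g-\sigma_g)$ at the fixed points forces the value $c_i$ at $iS$ to satisfy $c_i\equiv c_1^i$, so I keep only $a=c_1\in K^\times/(K^\times)^p$, with no eigenspace restriction since the $iS$ lie in $E(K)$. Evaluating at $T$ reads $\alpha(gT)\equiv\alpha(T)^g$; since $gT=\tau(T)$ this is exactly condition~(i), namely $b^g/\tau(b)\in(M^\times)^p$ with $b=\alpha(T)$, and by $G_K$-equivariance this single relation governs the whole orbit. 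Thus $\ker(g-\sigma_g)$ is parametrised by pairs $(a,b)$ subject to~(i).

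Next I would compute the $G$-orbits on the set $\Lambda$ of affine lines missing $\mathcal{O}$: one orbit of the $p-1$ horizontal lines $\{aS+iT:a\}$, with stabiliser $\langle\sigma\rangle$ and field $L_1$ (as $\sigma$ fixes $\zeta_p$), and $p-1$ orbits each of size $p$ consisting of the lines through a fixed $iS$, with stabiliser $\langle\tau\rangle$ and field $L_2$; take representatives $\ell_T=\{aS+T\}$ and $m_i=\{iS+bT\}$, so $B\isom L_1\times L_2^{p-1}$. On $\ell_T$ one has $\overline{u}(\alpha)=\prod_a\alpha(aS+T)=\prod_a\sigma^a(b)=N_{M/L_1}(b)$, giving condition~(ii), $N_{M/L_1}(b)\in(L_1^\times)^p$. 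The vertical orbits give the conditions $\overline{u}(\alpha)(m_i)=\prod_n\alpha(iS+nT)\in(L_2^\times)^p$, which are what must be matched to condition~(iii). For the forward inclusion, any $\alpha$ in $\im(\overline{w}_p)=\ker(g-\sigma_g)\cap\ker(\overline{u})$ satisfies, by Lemma~\ref{algebra-lemma} applied to $S,T$, that $\alpha(S)\alpha(T)/\alpha(S+T)\in(M^\times)^p$; since $\alpha(S)=a$, $\alpha(T)=b$ and $\alpha(S+T)=\alpha(\sigma T)=\sigma(b)$, this is precisely condition~(iii), $\sigma(b)/(ab)\in(M^\times)^p$. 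Hence $H^1(K,E[p])$ satisfies (i), (ii), (iii).

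The reverse inclusion is where the real work lies. Given $(a,b)$ satisfying (i), (ii), (iii), I must verify the vertical conditions directly. Iterating (iii) gives $\sigma^m(b)\equiv a^m b\pmod{(M^\times)^p}$ (using $a\in K$), and iterating (i) gives $\tau^k(b)\equiv b^{g^k}$. Writing each point $iS+nT$ with $n\neq0$ as $\rho(T)$ for $\rho=\sigma^{in^{-1}}\tau^{k}$ with $g^k=n$ and substituting collapses the individual factor to $\alpha(iS+nT)\equiv a^i b^n$, so that $\overline{u}(\alpha)(m_i)\equiv a^{ip}\,b^{\,p(p-1)/2}\in(M^\times)^p$. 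The subtlety I expect to be the main obstacle is that the conditions demand $p$-th powers in $L_2$, not merely in $M$; this is exactly where the $\mu_p$ argument does not transcribe verbatim, since there the relevant line field was $M$ itself. It is rescued by noting that $\overline{u}(\alpha)(m_i)\in L_2^\times$ while $[M:L_2]=p-1$ is coprime to $p$, so any element of $L_2^\times$ that is a $p$-th power in $M$ is already a $p$-th power in $L_2$ (apply $N_{M/L_2}$ and combine the exponents $[M:L_2]$ and $p$, using $\gcd([M:L_2],p)=1$). Combining the two inclusions with the injectivity of $\overline{w}_p$ from Theorem~\ref{TheoremSS} then identifies $H^1(K,E[p])$ with $H$.
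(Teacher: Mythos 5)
Your proposal is correct and follows essentially the same route as the paper's proof: the same orbit decompositions $A'\isom K^{p-1}\times M$ and $B\isom L_1\times L_2^{p-1}$, the same translation of $\ker(g-\sigma_g)$ and $\ker(\overline{u})$ into conditions (i) and (ii), and the same use of Lemma~\ref{algebra-lemma} to obtain condition (iii) in the forward inclusion. The only difference is cosmetic: for the reverse inclusion you evaluate the product over each line $m_i$ modulo $(M^\times)^p$ and then descend to $(L_2^\times)^p$ by the coprime-degree argument, whereas the paper applies $N_{M/L_2}$ to the congruences directly and uses $\gcd(g-1,p)=1$; the two verifications are equivalent.
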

\begin{proof}
  Again we use the description of $H^1(K,E[p])$ in
  Theorem~\ref{TheoremSS} as the intersection of $\ker(g-\sigma_g)$
  and $\ker(\overline{u})$.

  There are $p$ orbits for the action of $G_K$ on $E[p]\setminus
  \{\mathcal{O}\}$, with representatives $iS$ for $i\in\{1,\ldots,
  p-1\}$, and $T$. Therefore $H^1(K,E[p])\subset
  A'^\times/(A'^\times)^p$ where
  \begin{align*}
    A' \isom \underbrace{K \times \cdots \times K}_{p-1} \times M.
  \end{align*}
  Moreover $(a_1,a_2,\ldots, a_{p-1},b) \in A'^\times/(A'^\times)^p$
  belongs to $\ker(g-\sigma_g)$ if and only if $a_i \equiv a_1^i
  \mod{(K^\times)^p}$ for all $i\in\{1,\ldots, p-1\}$, and
  $b^g/\tau(b) \in (M^\times)^p$.  Accordingly we represent elements
  of $H^1(K,E[p])$ as pairs $(a,b)$ where $a = a_1$.

  We consider the action of $G_K$ on the set $\Lambda$ of affine lines
  in $E[p]$ missing the origin.  There is one orbit of size $p-1$
  represented by the line
  \[ \ell = \{ T, S+T, 2S + T, \ldots, (p-1)S + T \}, \] and $p-1$
  orbits of size $p$, represented by the lines $m_1, \ldots, m_{p-1}$
  where
  \[ m_i = \{ iS, iS + T, iS + 2T, \ldots, iS + (p-1)T \}. \] Thus the
  \'etale algebra $B$ associated to $\Lambda$ is given by
  \begin{align*}
    B\isom L_1 \times \underbrace{L_2 \times \cdots \times L_2}_{p-1}.
  \end{align*}

  A pair $(a,b)$ corresponding to $\alpha \in A^\times$ represents an
  element in $\ker(\overline{u})$ if and only if
  \[ N_{M/L_1} (b) \equiv \prod_{P \in \ell} \alpha(P) \equiv 1
  \mod{(L_1^\times)^p} \] and
  \begin{equation}
    \label{cond2again}
    a^iN_{M/L_2}(\sigma^i(b)) \equiv 
    \prod_{P \in {m_i}} \alpha(P) \equiv 1 \mod{(L_2^\times)^p}
  \end{equation}
  for all $i \in \{1,2,\ldots,p-1\}$.

  The condition~\eqref{cond2again} may be simplified as follows.
  First Lemma~\ref{algebra-lemma} tells us that for an element in the
  image of $H^1(K,E[p])$ we have
  \begin{equation}
    \label{cond:cfoss:again}
    \frac{\sigma(b)}{ab} = \frac{\alpha(S+T)}{\alpha(S) \alpha(T)}
    \in (M^\times)^p. 
  \end{equation}
  Conversely, if we assume~\eqref{cond:cfoss:again} then $\sigma^i(b)
  \equiv a^i b \mod{(M^\times)^p}$. If in addition $b^g/\tau(b) \in
  (M^\times)^p$ then by taking norms from $M$ down to $L_2$ it follows
  that
  \[ a^i N_{M/L_2} (\sigma^i(b)) \equiv N_{M/L_2} (b)\equiv 1
  \mod{(L_2^\times)^p}. \qedhere \]
\end{proof}

Let $\phi : E \to E'$ be the isogeny with kernel generated by $S$.
The first row of the following diagram is the long exact
sequence~\eqref{H1seq}. Since $E[\phi] \isom \Z/p\Z$ over $K$, we have
$E[\phi] \isom \mu_p$ over $L_1$ and hence $H^1(L_1,E[\phi]) \isom
L_1^\times/(L_1^\times)^p$ and $H^2(L_1,E[\phi]) \isom
\Br(L_1)[p]$. The first and last vertical maps are then obtained by
the inflation-restriction exact sequence. Since $E'[\phihat] \isom
\mu_p$ over $K$ we have $H^1(K,E'[\phihat]) \isom
K^\times/(K^\times)^p$. The remaining vertical map is given by
Theorem~\ref{copycatTheorem}.
\begin{equation}\label{otherImportantDiagram}
\begin{aligned}
  \xymatrix{ H^1(K,E[\phi])\ar[d]^-{\isom}\ar[r]^ {\iota_{*}}&
    H^1(K,E[p]) \ar[d]^-{\isom} \ar[r]^ {\phi_{*}} &
    H^1(K,E'[\phihat]) \ar[d]^-{\isom} \ar[r]^{\delta_2} &
    H^2(K,E[\phi])\ar[d]^{\isom}  \\
    \,\,\, (L_1^\times/(L_1^\times)^p)^{(1)} \ar[r]^-{f} & H
    \ar[r]^-{g} & K^\times/(K^\times)^p \ar[r]^-{\Delta} &
    \Br(L_1)[p]^{(1)} \!\!\!\!  }
\end{aligned}
\end{equation}
Again we define the maps $f$, $g$ and $\Delta$ so that this diagram
commutes. We now describe these maps explicitly.
\begin{Lemma}
  We have $f:b\mapsto (1,b)$ and $g:(a,b) \mapsto a$.
\end{Lemma}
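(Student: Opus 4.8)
The plan is to mirror the proof of Lemma~\ref{lem:fg}, which establishes exactly these formulae in the $\mu_p$-nonsplit case. The mechanism is identical: the second row of~\eqref{otherImportantDiagram} is obtained from the second row of the general diagram~\eqref{GeneralDiagram} by post-composing the pull-back maps $\phi^*$ and $\iota^*$ with suitable projections of the \'etale algebras onto the field factors in which Theorem~\ref{copycatTheorem} and~\eqref{H1isog} record a cohomology class.

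First I would pin down the relevant factors. As $E'[\phihat] \isom \mu_p$ has its nonzero points in a single $G_K$-orbit with $K(\phi T) = L_1$, the \'etale algebra $A_1 \isom K \times L_1$; as $E[\phi] = \langle S \rangle$ is pointwise fixed, $A_2$ is a product of copies of $K$; and, by the orbit analysis in the proof of Theorem~\ref{copycatTheorem}, among the factors of $A$ there is a copy of $K$ at the point $S$ and a copy of $M = K(T)$ at the orbit of $T$. The projections I would use are
\[ A_1 \to L_1,\ \alpha \mapsto \alpha(\phi T); \qquad A \to K \times M,\ \alpha \mapsto (\alpha(S),\alpha(T)); \qquad A_2 \to K,\ \alpha \mapsto \alpha(S), \]
which recover precisely the coordinates $a = \alpha(S)$ and $b = \alpha(T)$ of Theorem~\ref{copycatTheorem} and the coordinate $\alpha(\phi T)$ of~\eqref{H1isog}.

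Next I would read off the two maps in these coordinates. Pull-back by $\phi$ sends a function $\alpha$ on $E'[\phihat]$ to $\alpha \circ \phi$; since $\phi(S) = \mathcal O$ and $\phi(T) = \phi T$, and since $L_1 \subset M$, the image of $b$ has $S$-coordinate $1$ and $T$-coordinate $b$, giving $f : b \mapsto (1,b)$. Pull-back by $\iota$ is restriction to $E[\phi]$, so its $S$-coordinate is $\alpha(S) = a$, giving $g : (a,b) \mapsto a$. These are the asserted formulae.

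The only point requiring care, and the step I would treat as the main obstacle, is the bookkeeping that identifies the projected vertical maps of~\eqref{GeneralDiagram} with the vertical isomorphisms of~\eqref{otherImportantDiagram}: namely that $\overline{w}_\phi$ followed by $\alpha \mapsto \alpha(\phi T)$ lands in the $\chi_\cyc$-eigenspace $(L_1^\times/(L_1^\times)^p)^{(1)}$ and agrees with the inflation-restriction identification, and likewise that the $\ker(\overline{u})$-condition of Theorem~\ref{TheoremSS} is compatible with the projection $\alpha \mapsto (\alpha(S),\alpha(T))$. Once this compatibility is checked --- exactly as in the $\mu_p$-nonsplit case --- the computation of $f$ and $g$ is immediate.
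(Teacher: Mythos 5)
Your proposal is correct and follows exactly the paper's route: the paper's own proof simply says it is ``almost identical'' to that of Lemma~\ref{lem:fg}, i.e.\ one compares the second rows of~\eqref{GeneralDiagram} and~\eqref{otherImportantDiagram} via the projections $A_1 \to L_1$, $A \to K \times M$, $A_2 \to K$ given by evaluation at $\phi T$, $(S,T)$ and $S$ respectively, which is precisely what you do. Your closing remark about checking compatibility of these projections with the inflation-restriction identification is a point the paper leaves implicit, so spelling it out only strengthens the argument.
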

\begin{proof}
The proof is almost identical to that of Lemma~\ref{lem:fg}. 
%   The second rows in~\eqref{GeneralDiagram}
%   and~\eqref{otherImportantDiagram} differ in that we have applied
%   projection maps
% \begin{align*}
%   A_1 & \to L_1 & A & \to L_1 \times L_2 & A_2 & \to K \\
%   \alpha & \mapsto \alpha(\phi T) & \alpha & \mapsto
%   (\alpha(S),\alpha(T)) & \alpha & \mapsto \alpha(S)
% \end{align*}
% From this it is easy to see that the maps $f$ and $g$ defined in the
% statement of the lemma do indeed correspond to pull back by $\phi$ and
% $\iota$.
\end{proof}

Let $\beta$ be a Kummer generator for $M/L_1$. We write $\chi_a
\mapsto a$ for the Kummer isomorphism $H^1(L_1,\mu_p) \isom
L_1^\times/(L_1^\times)^p$.  Then exactly as in Section~\ref{sec:mup},
the map $\Delta$ is given by $\Delta : a \mapsto (\chi_a,\beta)$.
Again we show that making the lift from $H^1(K,E'[\phihat])$ to
$H^1(K,E[p])$ explicit comes down to solving a norm equation.
\begin{Theorem}\label{lemz3}
  Let $E/K$ be an elliptic curve with $p$-torsion of type
  $\Z/p\Z$-nonsplit.  Let $a\in K^\times/(K^\times)^p$.  If
  $\Delta(a)=0$ then there exists $\xi \in L_2$ satisfying
  $N_{L_2/K}(\xi)=a$, and we may lift $a$ to $(a,b) \in H$ where
  \[ b= \prod_{i=1}^{p-1} \sigma^i(\xi)^{p-i}.\]
\end{Theorem}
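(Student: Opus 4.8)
The plan is to prove existence of $\xi$ first, and then verify directly that the pair $(a,b)$ meets the three conditions defining $H$ in Theorem~\ref{copycatTheorem}. For existence, I would first show that $\Delta(a)=0$ forces $a$ to be a norm from $M/L_1$. Since $\Delta(a) = (\chi_a,\beta)=0$ in $\Br(L_1)$ and $\chi_a$ cuts out the cyclic extension $L_1(\sqrt[p]{a})/L_1$, Lemma~\ref{lem:cyclic} shows $\beta$ is a norm from $L_1(\sqrt[p]{a})/L_1$, and then Lemma~\ref{lem:switch} applied over $\mathbb{K}=L_1$ shows $a$ is a norm from $L_1(\sqrt[p]{\beta})/L_1 = M/L_1$, giving $\eta\in M$ with $N_{M/L_1}(\eta)=a$. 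The genuinely new point compared with Theorem~\ref{lemmu3} is that here the theorem asks for a norm from the \emph{non-Galois} extension $L_2/K$, not from $M/L_1$, so a descent step is still required.

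To descend I would compute $N_{M/K}(\eta)$ two ways. Transitivity through $L_1$ gives $N_{M/K}(\eta) = N_{L_1/K}(a) = a^{p-1}$ (as $a\in K^\times$ and $[L_1:K]=p-1$), while transitivity through $L_2$ gives $N_{M/K}(\eta) = N_{L_2/K}\bigl(N_{M/L_2}(\eta)\bigr)$. Hence $a^{p-1}$ is a norm from $L_2/K$; so too is $a^p = N_{L_2/K}(a)$. Since the norms form a subgroup of $K^\times$ and $\gcd(p-1,p)=1$, it follows that $a = a^p/a^{p-1}$ is a norm from $L_2/K$, i.e.\ there is $\xi\in L_2$ with $N_{L_2/K}(\xi)=a$ (explicitly $\xi = a/N_{M/L_2}(\eta)$).

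For the verification I would use the two facts $\tau(\xi)=\xi$ (since $\xi\in L_2 = M^{\langle\tau\rangle}$) and $\prod_{i=0}^{p-1}\sigma^i(\xi) = N_{L_2/K}(\xi) = a$ (the cosets of $\langle\tau\rangle$ in $G$ are represented by $1,\sigma,\dots,\sigma^{p-1}$), and then manipulate exponents. A short telescoping computation gives $ab/\sigma(b) = \sigma(\xi)^p$, so $\sigma(b)/(ab)\in(M^\times)^p$. Taking $N_{M/L_1}=\prod_{j=0}^{p-1}\sigma^j(-)$, each $\sigma^m(\xi)$ occurs in $N_{M/L_1}(b)$ with total exponent $\sum_{i=1}^{p-1}(p-i) = p(p-1)/2$, so $N_{M/L_1}(b) = a^{p(p-1)/2} = (a^{(p-1)/2})^p\in(L_1^\times)^p$ since $p$ is odd. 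Finally, using $\tau\sigma^i = \sigma^{ig^{-1}}\tau$ (equivalent to the relation $\tau\sigma^g=\sigma\tau$) together with $\tau(\xi)=\xi$, I get $\tau(b) = \prod_{i=1}^{p-1}\sigma^{ig^{-1}}(\xi)^{p-i}$; reindexing by $k\equiv ig^{-1}$ shows that the exponent of $\sigma^k(\xi)$ in $b^g/\tau(b)$ is $\equiv -gk + kg \equiv 0\pmod p$, whence $b^g/\tau(b)\in(M^\times)^p$. By Theorem~\ref{copycatTheorem} this proves $(a,b)\in H$. I expect the descent in the first two paragraphs to be the main obstacle: the triviality of $\Delta(a)$ only directly yields a norm from $M/L_1$, and one must route through $N_{M/K}$ and exploit $\gcd(p-1,p)=1$ to reach the non-Galois norm from $L_2/K$ that the statement demands. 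The three congruence checks are mechanical once the commutation relation $\tau\sigma^i = \sigma^{ig^{-1}}\tau$ and the identity $N_{L_2/K}(\xi)=a$ are in hand.
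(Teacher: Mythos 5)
Your proposal is correct and takes essentially the same route as the paper: the same appeal to Lemma~\ref{lem:cyclic} and Lemma~\ref{lem:switch} to show $a$ is a norm for $M/L_1$, the same formula for $b$, and the same three verifications against Theorem~\ref{copycatTheorem} (the relation $\tau\sigma^i=\sigma^{ig^{-1}}\tau$, the exponent count giving $N_{M/L_1}(b)=a^{p(p-1)/2}$, and the telescoping identity for $\sigma(b)/(ab)$). The one place you go beyond the paper is the descent step: the paper passes from "$a$ is a norm for $M/L_1$" to "$a$ is a norm for $L_2/K$" with a bare "and hence," whereas you justify it by computing $N_{M/K}(\eta)$ through both $L_1$ and $L_2$ and using $\gcd(p,p-1)=1$ (explicitly $\xi=a/N_{M/L_2}(\eta)$) --- a correct filling-in of a detail the paper leaves implicit, not a different method.
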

\begin{proof}
  If $\Delta(a)=0$ then Lemma~\ref{lem:cyclic} tell us that $\beta$ is
  a norm for $L_1(\sqrt[p]{a})/L_1$. It follows by
  Lemma~\ref{lem:switch} that $a$ is a norm for $M/L_1$, and hence for
  $L_2/K$.

  We show that $(a,b) \in H$ by checking the conditions in
  Theorem~\ref{copycatTheorem}. Since $\tau \sigma^g = \sigma \tau$
  and $\tau(\xi) = \xi$ we have
  \[ \tau(b) \equiv \prod_{i=1}^{p-1} \sigma^{g^{-1}i}(\xi)^{-i}
  \equiv \prod_{i=1}^{p-1} \sigma^{i}(\xi)^{-gi} \equiv b^{g}
  \mod{(M^\times)^p}. \] Since $\Gal(M/L_1) = \langle \sigma \rangle$
  and $p$ is odd, we have $N_{M/L_1}(b) = a^{p(p-1)/2} \in
  (K^\times)^p$, and $\sigma(b)/b \equiv N_{M/L_1}(\xi) \equiv a
  \mod{(M^\times)^p}$.
\end{proof}

\begin{Remark} It can be shown that $\beta \in
  (L_1^\times/(L_1^\times)^p)^{(2)}$. In particular, if $p=3$ then
  $\beta \in K^\times/(K^\times)^p$ and $L_2 = K(\sqrt[3]{\beta})$ is
  a pure cubic extension of $K$. Norm equations for extensions of this
  form are the subject of the next section.
\end{Remark}

\section{Solving norm equations}
\label{sec3}
In this section, we present a new algorithm for solving norm equations
in pure cubic extensions of the rationals. It is based on the
Legendre-type method for solving conics in~\cite{ratconics}.

\subsection{Diagonal cubic surfaces}
Let $K$ be a number field with ring of integers $\OK$.  Let $L =
K(\sqrt[3]{b})$ for some $b\in K$ not a cube.  We may represent any
element $\xi = A+B\sqrt[3]{b}+C\sqrt[3]{b}^2$ in $L$ in the form
\begin{align}\label{noZeroes}
  \xi = \frac{\alpha+\beta\sqrt[3]{b}}{\gamma+\delta\sqrt[3]{b}}.
\end{align}
Indeed this is clear if $B = C = 0$, and otherwise we put
\begin{align*}
\alpha &= AB-C^2b &\beta&=B^2-AC\\
\gamma&=B&\delta&=-C.
\end{align*}
Taking norms in~\eqref{noZeroes}
% We have $N_{L/K}(\xi) = (\alpha^3+b\beta^3)/(\gamma^3+b\delta^3)$,
we see that solving the norm equation $N_{L/K}(\xi) =a$ is equivalent
to find a $K$-rational point on the diagonal cubic surface
\begin{align*} 
  V_{a,b} = \{x_1^3+ax_2^3+bx_3^3+abx_4^3=0\} \subset \PP^3.
\end{align*}

\begin{Theorem}\label{swaps}
  Let $a,b\in K$. Then the following are equivalent
\begin{enumerate}
 \item $a$ is a norm for $K(\sqrt[3]{b})/K$.
 \item $b$ is a norm for $K(\sqrt[3]{a})/K$.
 \item $a^2b$ is a norm for $K(\sqrt[3]{a+b})/K$.
 \item $a^2b$ is a norm for $K(\sqrt[3]{a-b})/K$.
 \item $V_{a,b}(K)\neq \emptyset$.
\end{enumerate}
\end{Theorem}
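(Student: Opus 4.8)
The plan is to reduce all five conditions to the existence of a $K$-rational point on a diagonal cubic surface, and then to play the four surfaces $V_{a,b}$, $V_{b,a}$, $V_{a^2b,a+b}$ and $V_{a^2b,a-b}$ off against one another. By the discussion preceding~\eqref{noZeroes}, statement (i) is equivalent to (v), and the same reasoning with the roles of $a$ and $b$ exchanged shows (ii) is equivalent to $V_{b,a}(K)\neq\emptyset$. Since the coordinate swap $x_2\leftrightarrow x_3$ is an isomorphism $V_{a,b}\isom V_{b,a}$ defined over $K$, this already gives the chain (i)$\Leftrightarrow$(ii)$\Leftrightarrow$(v) over an arbitrary field. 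It remains to bring in (iii) and (iv).

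First I would reduce to the case $\zeta_3\in K$. For each of these surfaces $V$ and the quadratic extension $K'=K(\zeta_3)$, I claim $V(K)\neq\emptyset$ if and only if $V(K')\neq\emptyset$: given $P\in V(K')$ not already defined over $K$, its Galois conjugate $\overline P$ is a distinct point, the line joining them is defined over $K$, and the residual third intersection point with the cubic is Galois-stable, hence lies in $V(K)$. Over $K'$ I would then translate each norm condition into a Hilbert symbol: by Lemmas~\ref{lem:cyclic} and~\ref{lem:switch}, ``$y$ is a norm for $K'(\sqrt[3]{x})/K'$'' is equivalent to the triviality of the symbol $(x,y)\in\Br(K')[3]$, which by Proposition~\ref{UsingCProp} is bilinear, skew, and kills $(x,-x)$ and $(x,1-x)$; moreover $(x,-1)=1$ and hence $(x,x)=1$, since $-1$ is a cube. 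The heart of the matter is then the pair of identities $(a+b,a^2b)=(a,b)=(a-b,a^2b)$. For the first I would feed $\tfrac{a}{a+b}+\tfrac{b}{a+b}=1$ into the Steinberg relation to get $(\tfrac{a}{a+b},\tfrac{b}{a+b})=1$, expand by bilinearity to obtain $(a,b)=(a+b,a)^{-1}(a+b,b)$, and combine this with $(a+b,a^2b)=(a+b,a)^2(a+b,b)$ using $(a+b,a)^3=1$. The second identity follows verbatim from $\tfrac{a}{a-b}+\tfrac{-b}{a-b}=1$, after absorbing the sign via $(x,-1)=1$. These give (iii)$\Leftrightarrow$(i) and (iv)$\Leftrightarrow$(i) over $K'$, and the descent step above transports the equivalences back to $K$.

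The main obstacle is the book-keeping at the degenerate boundary, rather than the symbol computation, which is short. I would need to dispose separately of the cases where one of $a$, $b$, $a+b$, $a-b$ vanishes or is a cube, so that a cubic ``extension'' collapses and every element is trivially a norm, and to examine the edge cases of the residual-point argument: when the joining line lies on the surface (in which case it already carries $K$-points) and when the third intersection point is tangential. I would also verify that the dictionary between the norm equation and $V_{a,b}$ survives the vanishing of the denominator $\gamma+\delta\sqrt[3]{b}$ in~\eqref{noZeroes}, so that \emph{every} rational point of the surface, and not merely the generic ones, yields a genuine solution of the norm equation.
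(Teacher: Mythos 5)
Your proof is correct in substance, and for the hard implications (iii) and (iv) it takes a genuinely different route from the paper. The paper never leaves the ground field and never invokes the Brauer group: having established (i)$\Leftrightarrow$(ii)$\Leftrightarrow$(v) exactly as you do, it proves (ii)$\Leftrightarrow$(iii) by pure multiplicativity of norms together with repeated use of the swap (i)$\Leftrightarrow$(ii): $b$ is a norm for $K(\sqrt[3]{a})/K$ iff $b/a$ is, iff (swap) $a$ is a norm for $K(\sqrt[3]{b/a})/K$, iff $a+b=a(1+b/a)$ is a norm for that same extension (because $1+b/a=N(1+\sqrt[3]{b/a})$ is automatically one), iff (swap again) $b/a\equiv a^2b$ is a norm for $K(\sqrt[3]{a+b})/K$; then (iv) follows by replacing $b$ with $-b$. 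Your argument instead passes to $K'=K(\zeta_3)$ via the line-through-conjugate-points descent on cubic surfaces, and then establishes the clean identities $(a+b,a^2b)=(a,b)=(a-b,a^2b)$ in $\Br(K')[3]$ by Steinberg plus bilinearity; I checked this computation and it is right, including the step $2(a+b,a)=-(a+b,a)$. In fact both proofs ultimately rest on the same elementary mechanism $N(1+\sqrt[3]{c})=1+c$ (your Steinberg relation is the paper's ``$1+b/a$ is a norm'' in structural clothing). What the paper's route buys is economy: no quadratic extension, no descent lemma, no symbol calculus, and no case division on whether $\zeta_3\in K$. What your route buys is conceptual transparency and a reusable Brauer-class identity that explains why the theorem is true.

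Two caveats on your write-up, neither fatal. First, Proposition \ref{UsingCProp} is a statement about the \emph{local} Hilbert symbol $(\;,\;)_v$, so it cannot be cited for bilinearity and the Steinberg relation of the symbol over the field $K'$; instead, get bilinearity from the cup-product description \eqref{cyclic-cup}, and the relations $(x,-x)=(x,1-x)=0$ directly from Lemma \ref{lem:cyclic}, since $-x$ and $1-x$ are visibly norms from $K'(\sqrt[3]{x})$. Second, the degenerate cases you defer are genuinely needed but routine: for instance, if $b$ is not a cube then a point of $V_{a,b}(K)$ with $x_2=x_4=0$ would force $x_1^3+bx_3^3=0$ and hence $x_1=x_3=0$, so every rational point yields a valid solution of the norm equation; if $b$ is not a cube in $K$ it cannot become one in the quadratic extension $K'$ for degree reasons; and a line defined over $K$ that is tangent to the surface at $P$ is tangent at $\overline{P}$ too, hence lies in the surface, which is the case you already handle. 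The paper is equally terse about these boundary cases, so your flagging of them puts you at no disadvantage.
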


\begin{proof}
  We may assume that $a$ and $b$ are not cubes, otherwise conditions
  (i), (ii) and (v) are trivially satisfied.  We have already shown
  that (i) and (v) are equivalent.  The symmetry in (v) also shows
  that (ii) and (v) are equivalent.

  We now prove that (ii) and (iii) are equivalent. Suppose that $b$ is
  a norm for $K(\sqrt[3]{a})/K$. Then $b/a$ is a norm for
  $K(\sqrt[3]{a})/K$, and so by the equivalence of (i) and (ii), $a$
  is a norm for $K(\sqrt[3]{b/a})/K$. But then $a+b = a(1+b/a)$ is a
  norm for $K(\sqrt[3]{b/a})/K$, and again by the equivalence of (i)
  and (ii), $b/a$ is a norm for $K(\sqrt[3]{a+b})/K$.  The converse is
  proved by reversing these steps.  The same argument, with $b$
  replaced by $-b$, shows that (ii) and (iv) are equivalent.
\end{proof}

As observed by Selmer~\cite{Selmer}, it follows from the equivalence
of (i) and (v) in Theorem~\ref{swaps}, and the Hasse norm theorem,
that the surfaces $V_{a,b}$ satisfy the Hasse principle. We now turn
this into an algorithm for solving norm equations, at least in the
case $K= \Q$.  First we record an easy lemma.

\begin{Lemma}
\label{easylemma}
Let $a,b \in \OK$. If the surface $V_{a,b}$ is locally soluble at a
prime $\pp$ and $v_\pp(b) \not\equiv 0 \pmod{3}$ then $a$ is a cube
mod $\pp$.
\end{Lemma}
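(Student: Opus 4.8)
The plan is to reduce the whole statement to a valuation argument at $\pp$. First I would dispose of the trivial case: if $v_\pp(a) > 0$ then $a \equiv 0 \pmod{\pp}$, which is visibly a cube in the residue field, so we may assume that $a$ is a $\pp$-adic unit, $v_\pp(a) = 0$. Write $v = v_\pp$ and $e = v(b)$, so that by hypothesis $e \not\equiv 0 \pmod 3$. Then I would take a $K_\pp$-point on $V_{a,b}$ and scale its homogeneous coordinates $(x_1 : x_2 : x_3 : x_4)$ to be integral at $\pp$ and not all divisible by $\pp$, so the defining equation $x_1^3 + a x_2^3 + b x_3^3 + ab x_4^3 = 0$ holds with the $x_i$ in the valuation ring of $K_\pp$.

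The next step is to record the residues mod $3$ of the valuations of the four monomials. Since $v(a) = 0$, the terms $x_1^3$ and $a x_2^3$ have valuation $\equiv 0 \pmod 3$, whereas $b x_3^3$ and $ab x_4^3$ have valuation $\equiv e \pmod 3$ with $e \not\equiv 0$. The crux is the ultrametric principle that in a vanishing sum of nonzero $\pp$-adic numbers the minimal valuation is attained at least twice. Because the two families of monomials lie in distinct residue classes mod $3$, a monomial from one family can never share the valuation of a monomial from the other; hence the overall minimum $m$ is attained by two monomials \emph{within a single family}. This forces a pair of coordinates to be nonzero of equal valuation.

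To finish, let $\pi$ be a uniformiser at $\pp$. In the first family one gets $v(x_1^3) = v(a x_2^3) = m$, so writing $u_1 = x_1/\pi^{v(x_1)}$ and $u_2 = x_2/\pi^{v(x_2)}$ as units and dividing the equation by $\pi^m$, the two $b$-terms have valuation strictly greater than $m$ and drop out mod $\pp$, leaving $u_1^3 + a u_2^3 \equiv 0 \pmod{\pp}$. In the second family the same manipulation, after absorbing the unit part of $b = \pi^e(\text{unit})$, yields $u_3^3 + a u_4^3 \equiv 0 \pmod{\pp}$. Either way $a \equiv -(u_i/u_j)^3 \pmod{\pp}$, and since $-1 = (-1)^3$ is itself a cube we conclude that $a$ is a cube mod $\pp$.

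The step I expect to require the most care is the bookkeeping in this last reduction: one must verify that after dividing by $\pi^m$ the two minimising terms are genuinely units (so they survive mod $\pp$) while the remaining two have strictly larger valuation (so they vanish), and that this works uniformly for both choices of minimising family. In particular, when the minimum lies in the pair $\{b x_3^3, ab x_4^3\}$ the factor $b$ must be normalised so that the surviving congruence again reduces cleanly to $u_3^3 + a u_4^3 \equiv 0$.
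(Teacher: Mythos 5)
Your proof is correct, and it reaches the conclusion by a genuinely different mechanism than the paper. The paper's proof first normalises so that $v_\pp(b)\in\{1,2\}$ (harmless, since multiplying $b$ by a cube amounts to rescaling $x_3,x_4$), takes a point with $\min_i v_\pp(x_i)=0$, and argues by contradiction: reducing the equation mod $\pp$ gives $x_1^3+ax_2^3\equiv 0$, so if $a$ were not a cube then $x_1\equiv x_2\equiv 0\pmod{\pp}$; dividing by $b$ (this is where $v_\pp(b)<3$ is used) then gives $x_3^3+ax_4^3\equiv 0\pmod{\pp}$, forcing $x_3\equiv x_4\equiv 0$ and contradicting primitivity. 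You instead leave $b$ unnormalised and use the ultrametric principle that in a vanishing sum the minimal valuation among the nonzero terms is attained at least twice, combined with the observation that the monomials split by valuation class mod $3$ into the families $\{x_1^3,ax_2^3\}$ (class $0$) and $\{bx_3^3,abx_4^3\}$ (class $e\not\equiv 0$); the two minimising terms must then lie in a single family, and dividing by $\pi^m$ yields a unit congruence $u^3+aw^3\equiv 0\pmod{\pp}$, whence $a\equiv -(u/w)^3$ is a cube. Your route avoids both the normalisation of $b$ and the contradiction/primitivity argument, and it exposes the mod-$3$ valuation structure that the paper's normalisation hides; the paper's route is shorter once the normalisation is granted. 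Two small points to make airtight in your version: justify that at least two of the four monomials are nonzero (a projective point has a nonzero coordinate, and a single nonzero term cannot sum to zero), so the ``minimum attained twice'' principle applies; and note that the terms outside the minimising family, being $\ge m$ but in a different residue class mod $3$, are automatically $\ge m+1$, which is exactly why they vanish after dividing by $\pi^m$ — you state this, and it is the correct reason.
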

\begin{proof} Working in the completion $K_\pp$ we may assume that
  $v_\pp(b) = 1$ or $2$. Let $(x_1: \ldots:x_4)$ be a local point with
  $\min v_\pp(x_i) = 0$. Then $x_1^3 + a x_2^3 \equiv 0 \pmod{\pp}$.
  If $a$ is not a cube mod $\pp$ then $x_1 \equiv x_2 \equiv 0
  \pmod{\pp}$.  But then $x_3^3 + a x_4^3 \equiv 0 \pmod{\pp}$, and we
  likewise deduce that $x_3 \equiv x_4 \equiv 0 \pmod{\pp}$. This
  contradicts that $\min v_\pp(x_i) = 0$. Therefore $a$ must be a cube
  mod $\pp$.
\end{proof}

We suppose as above that $a,b \in \OK$, and that $V_{a,b}$ is
everywhere locally soluble. If $K$ has class number $1$ then we may
assume that $(b)$ is cube-free, and indeed write $(b) = \bb_1 \bb_2^2$
where $\bb_1$ and $\bb_2$ are coprime and square-free. Then by
Lemma~\ref{easylemma} and the Chinese Remainder Theorem there exists
$c \in \OK$ such that $a \equiv c^3 \pmod{\bb_1}$. Writing $\bb_1 =
(b_1)$ for some $b_1 \in \OK$ it follows that the binary cubic form
\begin{align}\label{specialbincub}
  F(X,Y) = \frac{1}{b_1}\bigg( (cX+b_1 Y)^3-aX^3 \bigg) 
\end{align}
has coefficients in $\OK$. This form has discriminant $\Delta(F) =
-27a^2b_1^2$.

We seek to find $u,v \in \OK$, not both zero, such that $F(u,v)$ is
small. In the next section we explain how to do this in the
case $K=\Q$.

\subsection{Reduction of binary cubic forms}
\label{sec:red}
Let $G$ in $\R[X,Y]$ be a binary quadratic form, and $\Delta(G)$ its
discriminant:
\begin{align*}
  G(X,Y) &= aX^2+bXY+cY^2,\\
  \Delta(G) &= b^2-4ac.
\end{align*}
The group $\SL_2(\Z)$ acts on $\R[X,Y]$ via
\[ G(X,Y)\cdot \begin{pmatrix}\alpha&\beta\\
  \gamma&\delta\end{pmatrix} = G(\alpha X+\beta Y, \gamma X+\delta
Y)\] and the discriminant is invariant under this action.

\begin{Definition}\label{simpleReductionDef}
  A positive definite binary quadratic form $G(X,Y) = aX^2+bXY+cY^2$
  is \emph{reduced} if $|b|\leq a\leq c$.
%  with $b>0$ if there is equality. 
  Equivalently, $G$ is reduced if the root of $G(X,1)=0$ in the upper
  half plane $H$ lies in the fundamental region
 \begin{align*}
   \mathcal{F}%_\Q
   = \left \{ z \; % \mid
     \Bigg| \; z\in H, \; |z|\geq 1, \; -\frac{1}{2}\leq
     \textup{Re}(z)\leq \frac{1}{2} \right \}.
 \end{align*}
\end{Definition}
Consider now the general binary cubic form and its discriminant
\begin{align*}
  f(X,Y) &= aX^3+bX^2Y+cXY^2+dY^3 \numberthis \label{bincubic} \\
  \Delta(f)&=b^2c^2-4ac^3-4b^3d-27a^2d^2+18abcd.
\end{align*}
If $\Delta(f)<0$, then $f$ has one real root and a pair of complex
conjugate roots $\beta,\overline{\beta}$. We associate to $f$ the
binary quadratic form
\begin{align}\label{assoQ}
Q(f) = (X-\beta Y)(X-\overline{\beta} Y).
\end{align}
There are other forms we could choose, some of which are discussed in
\cite{cremonaReduction}, however this is the simplest option, and is
sufficient for our purposes.

\begin{Definition}\label{defReduced}
  A binary cubic form~\eqref{bincubic} is \emph{Minkowski-reduced} if
  the positive definite form $Q(f)$ in~\eqref{assoQ} is reduced in the
  sense of Definition~\ref{simpleReductionDef}.
\end{Definition}

We use the following result from the geometry of numbers
\cite[II.5.4]{geomnum}.

\begin{Theorem}[Davenport \cite{DavenportCubics}]
  \label{bdthm}
  If $f$ in $\Z[X,Y]$ is a binary cubic form with discriminant $\Delta
  = \Delta(f)<0$, then there are integers $(u,v) \neq (0,0)$ such that
  \begin{equation*}
  | f(u,v)| \leq \left |\frac{\Delta}{23}\right |^{1/4}.
\end{equation*}
If, further, $f$ is Minkowski-reduced in the sense of Definition
\ref{defReduced}, then
\[ \textup{min} \big\{|f(1,0)|,|f(0,1)|,|f(1,\pm 1)|,|f(1, \pm 2)|\big\} \leq
\left |\frac{\Delta}{23}\right |^{1/4}, \] with equality only when $f(X, \pm Y)
= A(X^3 + X^2Y + 2XY^2 + Y^3)$.
\end{Theorem}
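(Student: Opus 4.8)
The plan is to deduce the first assertion from the second, so the real content is the statement about Minkowski-reduced forms. Both $\min_{(u,v)\neq(0,0)}|f(u,v)|$ and the discriminant $\Delta$ are invariant under the $\SL_2(\Z)$-action on $f$, and every $f$ is $\SL_2(\Z)$-equivalent to a Minkowski-reduced form (apply the usual reduction theory of Definition~\ref{simpleReductionDef} to the positive definite form $Q(f)$). Recording this reduction first, it suffices to prove the effective bound at the six explicit lattice points for a reduced $f$; the general existence statement is then immediate.

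Next I would set up coordinates adapted to the factorisation. Since $\Delta<0$ I write $f(X,Y)=a(X-\alpha Y)(X-\beta Y)(X-\overline\beta Y)$ with $\alpha\in\R$ and $\beta=p+qi$, $q>0$, so that $Q(f)=(X-\beta Y)(X-\overline\beta Y)=(X-pY)^2+q^2Y^2$ and
\[ |f(u,v)| = |a|\,|u-\alpha v|\,\big((u-pv)^2+q^2v^2\big). \]
Evaluating $\Delta=a^4\prod_{i<j}(r_i-r_j)^2$ on the roots $\alpha,\beta,\overline\beta$ gives
\[ |\Delta| = 4a^4 q^2\big((\alpha-p)^2+q^2\big)^2, \]
and the hypothesis that $f$ is Minkowski-reduced becomes the condition that $\beta$ lies in $\mathcal{F}$, i.e. $|p|\le\tfrac12$ and $p^2+q^2\ge1$ (hence $q^2\ge\tfrac34$). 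Normalising $a=1$ (the case $a=0$ is trivial, since then $|f(1,0)|=0$) and using that $M^4/|\Delta|$ is scale-invariant, the theorem reduces to a single inequality in the three real parameters $\alpha,p,q$ on the region $|p|\le\tfrac12$, $p^2+q^2\ge1$.

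I would then write out the six candidate values: $|f(1,0)|=1$, $|f(0,1)|=|\alpha|(p^2+q^2)$, and $|f(1,v)|=|1-\alpha v|\big((1-pv)^2+q^2v^2\big)$ for $v\in\{\pm1,\pm2\}$, and must show that the minimum $M$ of these satisfies $23M^4\le|\Delta|$; that is,
\[ \min\Big\{\,1,\ |\alpha|(p^2+q^2),\ |1-\alpha v|\big((1-pv)^2+q^2v^2\big)\ \ (v=\pm1,\pm2)\,\Big\}^4 \le \tfrac{4}{23}\,q^2\big((\alpha-p)^2+q^2\big)^2 \]
on the reduced region, together with the claim that equality forces $f(X,\pm Y)=A(X^3+X^2Y+2XY^2+Y^3)$.

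The heart of the matter, and the step I expect to be the main obstacle, is this constrained optimisation. The natural strategy is to argue by contradiction: assume every one of the six values exceeds $(|\Delta|/23)^{1/4}$ and derive a set of inequalities in $\alpha,p,q$ that are jointly inconsistent. The difficulty is that the point realising the minimum changes across the reduced region, forcing a case analysis according to the position of $\alpha$ relative to the breakpoints $\alpha\in\{0,\pm\tfrac12,\pm1\}$ at which the linear factors $1-\alpha v$ vanish; within each regime one bounds $M$ below by the relevant linear-times-quadratic expression and compares it with $\tfrac4{23}q^2((\alpha-p)^2+q^2)^2$. The constant $23$ and the extremal form appear because $X^3+X^2Y+2XY^2+Y^3$ has discriminant exactly $-23$ and represents $1$ at $(1,0)$ (geometrically, $23$ is the critical determinant of the star body $|f|\le1$); verifying that this is the unique equality case amounts to showing the system of optimality conditions has a single solution, which I would confirm by locating the common critical point of the competing expressions and matching it to the roots of the extremal cubic, up to the symmetry $Y\mapsto -Y$. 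The individual inequalities in each case are elementary but delicate, and arranging the case division to be exhaustive is where the care lies; this is the substance of Davenport's argument, and I would follow the treatment in \cite{geomnum} and \cite{DavenportCubics}.
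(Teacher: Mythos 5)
The first thing to note is that the paper does not prove Theorem~\ref{bdthm} at all: it is imported as a known result, with the proof delegated to Davenport \cite{DavenportCubics} and to \cite[II.5.4]{geomnum}. So the only meaningful comparison is between your attempt and Davenport's published argument. Your preliminary reductions are correct and are indeed the standard entry into that argument: $\min_{(u,v)\neq(0,0)}|f(u,v)|$ and $\Delta$ are $\SL_2(\Z)$-invariant, every form with $\Delta<0$ is equivalent to a Minkowski-reduced one, so the second assertion implies the first; the factorisation $f=a(X-\alpha Y)(X-\beta Y)(X-\overline{\beta}Y)$ gives $|\Delta|=4a^4q^2\bigl((\alpha-p)^2+q^2\bigr)^2$; reducedness of $Q(f)$ in the sense of Definition~\ref{simpleReductionDef} is exactly $|p|\le\tfrac{1}{2}$, $p^2+q^2\ge 1$; and since the claimed inequality is invariant under scaling $f$, passing to $a=1$ (after disposing of the trivial case $a=0$, where $f(1,0)=0$) and hence to real rather than integral forms is legitimate.

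However, everything after that is a description of what a proof would have to do, not a proof. The constrained optimisation --- showing that on the region $|p|\le\tfrac{1}{2}$, $p^2+q^2\ge 1$ the minimum $M$ of the six quantities satisfies $23M^4\le|\Delta|$, identifying $23$ as the optimal constant, and pinning down the equality case $f(X,\pm Y)=A(X^3+X^2Y+2XY^2+Y^3)$ --- is the entire content of the theorem, and you explicitly defer it to the same references the paper cites. The case division you sketch (breakpoints at $\alpha\in\{0,\pm\tfrac{1}{2},\pm1\}$, contradiction from six simultaneous inequalities) is plausible but is precisely the delicate part, and nothing in your write-up establishes any single one of those inequalities, verifies exhaustiveness of the cases, or rules out other equality configurations. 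Judged as a self-contained proof this is a genuine gap. Judged against the paper, your treatment is no less complete than the paper's own (a citation), and your framing correctly identifies the structure of Davenport's proof; but it should then be presented honestly as a reduction of the theorem to its computational core plus a citation, rather than as a proof attempt.
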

The second part of the theorem, together with the well known algorithm
for reducing positive definite binary quadratic forms, gives an
algorithm for finding integers $u,v$ satisfying the conditions in the
first part of the theorem.

\subsection{An algorithm over the rationals}
We now take $K = \Q$. Let $a$ and $b$ be positive cube-free
integers. We write $b = b_1 b_2^2$ where $b_1$ and $b_2$ are positive,
coprime and square-free.  Applying the results of
Section~\ref{sec:red} to the binary cubic~\eqref{specialbincub}, we
can find $u,v \in \Z$ such that
\begin{align}\label{conds}
  0 < F(u,v) < \left(\frac{27}{23}\right)^{1/4}(a b_1)^{1/2}.
\end{align}

We observe that
\begin{equation}
\label{eqn-star}
N_{\Q(\sqrt[3]{a})/\Q}\bigg( b_2 ((cu+b_1v)-\sqrt[3]{a}u) \bigg) =
b_1b_2^3F(u,v)=bb_2F(u,v). 
\end{equation}
If we can find $\eta \in \Q(\sqrt[3]{a})$ such that
$N_{\Q(\sqrt[3]{a})/\Q}(\eta)=b_2F(u,v)$ then, by the multiplicativity
of the norm, we can find $\xi \in \Q(\sqrt[3]{a})$ such that
$N_{\Q(\sqrt[3]{a})/\Q}(\xi)=b$. Ideally, we want $b_2F(u,v)<b$, so
that our norm equation is replaced by a smaller one.  Unfortunately,
the bound~\eqref{conds} isn't quite strong enough to prove this. Our
solution to this problem is to use condition (iv) in
Theorem~\ref{swaps}.
\begin{Algorithm}\label{normAlg} (Legendre-type algorithm for solving
  cubic norm equations) \\ \textbf{Input:} A pair of positive integers
  $(a,b)$ such that $b$ is a norm
  for $\Q(\sqrt[3]{a})/\Q$.\\
  \textbf{Output:} A list of pairs $(a,b)$, with $b$ a norm for
  $\Q(\sqrt[3]{a})/\Q$, such that a solution to each norm equation
  allows us to read off a solution to the previous one.
  \begin{enumerate}
  \item \label{stepi} Replace $a$ and $b$ by their cube-free parts.
    If $a>b$ then swap $a$ and $b$.
  \item If $a=0$ or $1$ then stop.
  \item \label{stepf} Write $b = b_1 b_2^2$ where $b_1$ and $b_2$ are
    positive, coprime and square-free. Solve for $c \in \Z$ such that
    $a \equiv c^3 \pmod{b_1}$.
  \item Define $F \in \Z[X,Y]$ as in~\eqref{specialbincub}.  Use
    reduction theory to find $u,v \in \Z$ satisfying~\eqref{conds}.
  \item \label{case1} If $b_2 F(u,v) < \frac{3}{4} b$ then replace
    $(a,b)$ by $(a,b_2 F(u,v))$ and go to Step~\eqref{stepi}.
  \item \label{case2} Otherwise, replace $(a,b)$ by $(b-a,a^2 b)$ and
    go to Step~\eqref{stepi}.
  \end{enumerate}
\end{Algorithm}
When the algorithm terminates, it is clear by~\eqref{eqn-star} and the
proof of Theorem~\ref{swaps} that we may solve the original norm
equation.

\begin{Theorem}
  If $a,b \le B$ then Algorithm~\ref{normAlg} takes $O((log B)^2)$
  iterations.
\end{Theorem}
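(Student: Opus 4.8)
The plan is to track two interacting quantities: the product $ab$ and the minimum $\min(a,b)$, which after Step~\eqref{stepi} always equals $a$. The starting point for all estimates is the inequality obtained by combining the bound~\eqref{conds} with the factorisation $b = b_1 b_2^2$: since $b_2 b_1^{1/2} = (b_1 b_2^2)^{1/2} = b^{1/2}$, we get
\[ b_2 F(u,v) < b_2\left(\tfrac{27}{23}\right)^{1/4}(a b_1)^{1/2} = \left(\tfrac{27}{23}\right)^{1/4}(ab)^{1/2} \le \left(\tfrac{27}{23}\right)^{1/4} b. \]
So the candidate replacement $b_2 F(u,v)$ for $b$ is at most a fixed constant times $(ab)^{1/2}$.

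First I would analyse the two branches separately. In Step~\eqref{case1} the new pair is $(a, b_2 F(u,v))$, and since passing to cube-free parts only shrinks entries, the new product is at most $a \cdot b_2 F(u,v) < \tfrac34 ab$; thus every such iteration multiplies $ab$ by a factor $\le \tfrac34$. In Step~\eqref{case2} the defining inequality $b_2 F(u,v) \ge \tfrac34 b$ combined with the display forces $\tfrac34 b < (27/23)^{1/4}(ab)^{1/2}$, hence $b < \tfrac{16}{9}(27/23)^{1/2}\,a < 1.93\,a$. So Step~\eqref{case2} can only be taken when $a$ and $b$ are within a constant factor, and in particular $b - a < 0.93\,a < a$.

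The key observation is that $\min(a,b)=a$ is essentially monotone. It never increases, because in Step~\eqref{case1} the new pair is $(a,\ast)$ and in Step~\eqref{case2} the smaller new entry is at most the cube-free part of $b-a$, which is $<0.93\,a$. Moreover each time Step~\eqref{case2} is executed, $\min(a,b)$ is multiplied by a factor $<0.93$, whereas Step~\eqref{case1} can never enlarge it. Since $\min(a,b)$ is a positive integer that is at most $B$ initially and the algorithm halts once $a\le 1$, the number $N_2$ of Step~\eqref{case2} iterations satisfies $N_2 = O(\log B)$.

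Finally I would balance the product against these Step~\eqref{case2} expansions. Using $b-a<a$ together with $a=\min(a,b)\le B$ (valid at every stage, as the minimum never increases), a single Step~\eqref{case2} multiplies $ab$ by at most $\frac{(b-a)(a^2b)}{ab}=(b-a)a<a^2\le B^2$, i.e. it raises $\log(ab)$ by at most $2\log B$. Since $\log(ab)$ starts at most $2\log B$ and stays nonnegative, the total decrease contributed by the $N_1$ Step~\eqref{case1} iterations obeys
\[ N_1\,\log\tfrac43 \le 2\log B + 2N_2\log B = O\big((\log B)^2\big), \]
so $N_1 = O((\log B)^2)$ and the total count $N_1 + N_2$ is $O((\log B)^2)$. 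I expect the main obstacle to be precisely this interaction: no single potential works, since Step~\eqref{case2} inflates $ab$; the point is that the same step contracts $\min(a,b)$ by a constant factor, which caps the number of inflating steps at $O(\log B)$ and hence bounds the total damage to the product.
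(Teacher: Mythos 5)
Your proof is correct and is essentially the paper's own argument: both rest on the same key facts — Step (vi) can fire only when $b < 1.93\,a$, hence multiplies the never-increasing minimum $a$ by a factor $< 0.93$ and so occurs $O(\log B)$ times, while Step (v) contracts by a factor $3/4$. The only difference is the final bookkeeping: you run a global amortized (potential) argument on $\log(ab)$, each Step (vi) adding at most $2\log B$, whereas the paper counts the $O(\log B)$ Step (v) iterations between consecutive Step (vi) events using $b_{\mathrm{new}} = a^2 b < 2a^3 \le 2B^3$; the two accountings are equivalent (and yours handles slightly more cleanly the possibility of a swap after Step (v)).
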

\begin{proof}
  In Step~\eqref{case1} we have $a_\textup{new} = a$ and
  $b_\textup{new} < \frac{3}{4} b$. So if we never reach
  Step~\eqref{case2} then the algorithm takes $O(\log B)$ iterations.
  If we reach Step~\eqref{case2} then
  \[ \frac{3}{4} b \le b_2 F(u,v) < \left(\frac{27}{23}\right)^{1/4}
  (a b)^{1/2} \] and so $b < 1.93 a$. In this case $a_\textup{new} =
  b- a < 0.93 a$, and so the total number of applications of
  Step~\eqref{case2} is $O(\log B)$. Moreover $b_\textup{new} = a^2 b
  < 2 a^3 \le 2 B^3$ and so Step~\eqref{case1} is applied $O(\log B)$
  times between each application of Step~\eqref{case2}.
\end{proof}

\begin{Remark}
  The bottleneck in Algorithm~\ref{normAlg} comes in
  Steps~\eqref{stepi} and~\eqref{stepf}, as these are the steps that
  involve factoring. We expect it would be possible to modify the
  algorithm, along the lines of \cite[Section 2.5]{ratconics}, so that
  factoring is only required on the first iteration. However we have
  not worked out the details.
\end{Remark}

We give two examples, the first illustrating the need for Step (vi),
and the second in preparation for Example~\ref{ThirdEx}.  The actual
solutions to the norm equations are rather large, so we do not record
them here.

\begin{Example}\label{NormEx1}
  Let $a = 5316$ and $b = 35685$. The steps taken by
  Algorithm~\ref{normAlg} are recorded in the rows of the following
  table.  On the second iteration we have $b_2 F(u,v) = 5382 > b$ and
  so we reach Step (vi).
  \[ \begin{array}{cc|ccccc} a & b & b_1 & b_2 & c & u & v \\ \hline
    5316 & 35685 & 3965 & 3 & 2521 & -11 & 7 \\
    5316 &  5364 &  149 & 6 & 52   & -2  & 1 \\
    48 & 151585867584 & \multicolumn{5}{c}{
      \text{ [take cube-free parts] } }  \\
    6 &  87723303 & 447 & 443 & 123 & -11 &  3 \\
    6 &      6202 & 6202 & 1 & 2596 & -43 &  18 \\
    6 &        77 & 77 & 1 &  41 & 2 & -1 \\
    1 & 6
  \end{array}
  \]
\end{Example}

\begin{Example}\label{NormEx2}
  Let $a = 17$ and $b = 2850760453176384635894983495759$. On the first
  iteration we have $c = 2512758208506770505416151958382$ and
  \[ (u,v) = (-1056910260262351,931597016217248). \] On this and
  subsequent iterations we have $b_1 = b$ and $b_2=1$.
  \[ \begin{array}{cc|ccc} a & b & c & u & v \\ \hline
    17 &  3227115996467513 & 3079766255214306 & 1678826 & -1602171 \\
    17 &  69326065         & 67724958         & -3767   & 3680 \\
    17 & 13311 & % = 3^3 \cdot 493 &
    \multicolumn{3}{c}{
      \text{ ~\quad [take cube-free parts] } }  \\
    17 &  493              & 476              & -1      & 1 \\
    10 &  17               & 3                & 1       &  0 \\
    1 & 10
\end{array}
\]
\end{Example}

In \cite[Chapter 4]{phdMonique} we investigated analogues of
Algorithm~\ref{normAlg} over other number fields with small
discriminant. Although we couldn't prove that these methods always
work, they seem to perform quite well in practice, at least in
reducing the norm equations to ones that can be solved by traditional
methods.

\section{Examples}
\label{sec4}
In this section, we give some examples in the case $K= \Q$, showing
how the results of Sections~\ref{sec2} and~\ref{sec3} may be used to
compute the Cassels-Tate pairing on $3$-isogeny Selmer groups.
Further examples are given in \cite{phdMonique}.

We identify $\frac{1}{3}\Z/\Z$ with $\Z/3\Z$ via multiplication by
$3$, so that the matrices below have entries $0,1,2$ rather than
$0,\frac{1}{3},\frac{2}{3}$. We also write $\langle a_1, a_2 \ldots
\rangle$ for the subgroup generated by $a_1, a_2, \ldots$.

\begin{Example}\label{mu3FirstEx}
  Let $E$ and $E'$ be the $3$-isogenous elliptic curves 
  labelled 63531c1 and 63531c2 in Cremona's tables~\cite{cremdata}.
  \begin{align*}
  E: \quad y^2 &=x^3-3(4x+52)^2 \\
  E': \quad y^2 & =x^3+36^2(x+543)^2 
  \end{align*}
  The Galois action on $E[3]$ is of type $\mu_3$-nonsplit.  Indeed
  $E[3]$ is generated by
  \[ S=(0,52\sqrt{-3}) \quad \text{ and } \quad T =
  (156/(\theta-4),156 \theta/(\theta-4)) \] where $\theta =
  \sqrt[3]{181}$.  We set $\zeta_3 = (-1 + \sqrt{-3})/2$.  As in
  Section~\ref{sec:mup} we have fields $L_1=\Q(\zeta_3)$,
  $L_2=\Q(\theta)$ and $M=\Q(\zeta_3,\theta)$. A descent by
  $3$-isogeny (see the introduction for references) computes the
  Selmer groups
  \begin{align*}
    S^{(\phi)}(E/\Q) &= \langle 181\rangle \subset \Q^\times/(\Q^\times)^3 \\
    S^{(\phihat)}({E'}/\Q) &= \langle \zeta_3,39\zeta_3 + 52 \rangle
    \subset \left(L_1^\times/(L_1^\times)^3\right)^{(1)}.
  \end{align*}
  This gives an upper bound of $2$ for the rank of $E(\Q)$.  We seek
  to improve this bound by computing the Cassels-Tate pairing on
  $S^{(\phihat)}({E'}/\Q)$.

  We start by lifting $a_1 = \zeta_3$ and $a_2 = 39\zeta_3+52$
  globally to $H^1(\Q,E[3]) \isom H$, where $H\subset
  L_1^\times/(L_1^\times)^3 \times L_2^\times/(L_2^\times)^3$ is given
  by Theorem \ref{explicitmu3}.  We used the existing function {\tt
    NormEquation} in Magma (this example is too small for the methods
  of Section~\ref{sec3} to be needed) to solve the norm equations
  $N_{M/L_1}(\xi) = a_i$ for $i=1,2$, and then computed $b_i$ with
  $(a_i,b_i) \in H$ using Theorem~\ref{lemmu3}.  We used the method
  in~\cite[Section 2]{improve4} to find a small representative for
  $b_i$ in $L_2^\times/(L_2^\times)^3$.  By~\eqref{importantDiagram}
  we are free to multiply $b_i$ by any element in
  $\Q^\times/(\Q^\times)^3$.  In this way we obtain
  \begin{equation}
    \label{b1b2}
    \begin{aligned}
      % a_1 & = \zeta_3 &
      b_1 &= 7 \theta^2 + 40 \theta + 217
      & N_{L_2/\Q}(b_1) &= 2^6 3^6 \\
      % a_2 & = 13(3 \zeta_3 + 4) &
      b_2 &= 59 \theta^2 + 314 \theta + 3011 & N_{L_2/\Q}(b_2) &= 2^3
      3^{12} 13^3
    \end{aligned}
  \end{equation}

  The connecting map $\delta_3 : E(\Q) \to H^1(\Q,E[3])$ in
  \eqref{keydiagram} may be computed as described in
  \cite[Chapter~X]{sil}.  It is given by the tangent lines at $S$ and
  $T$, i.e.  $P \mapsto (\tan_S(P),\tan_T(P))$ where
  \begin{align*}
  \tan_S (x,y) &= y - 4 \sqrt{-3} x - 52 \sqrt{-3}, \\
  \tan_T (x,y) &= y - 2 (\theta+2) x + 156 (\theta + 4)/(\theta - 4).
  \end{align*}
  The local analogue of this map is given by the same formula.

  Let $(a,b) = (a_1,b_1)$ or $(a_2,b_2)$. Since $a \in
  S^{(\phihat)}(E'/\Q)$ there exists for each prime $p$ a local point
  $P_p \in E(\Q_p)$ with $\tan_S(P_p) \equiv a
  \mod{(\Q_p(\zeta_3)^\times)^3}$. Then $(a,b)$ and
  $(\tan_S(P_p),\tan_T(P_p))$ are both local lifts of $a$.
  By~\eqref{importantDiagram} and Lemma~\ref{lem:fg} it follows that
  $b/\tan_T(P_p) \equiv \xi_p \mod{(\Q_p(\zeta_3)^\times)^3}$ for some
  $\xi_p \in \Q_p^\times/(\Q_p^\times)^3$.  By
  Definition~\ref{wpDefCT} and Lemma~\ref{prop:localpair} we have
  \begin{equation}
    \label{ctpformula}
    \langle a, a' \rangle_\CT = \sum_p \frac{1}{[\Q_p(\zeta_3):\Q_p]}
    \Ind_{\zeta_3} (\xi_p, a')_p 
\end{equation}
where $(~,~)_p$ is the Hilbert norm residue symbol on
$\Q_p(\zeta_3)$. % ^\times/(\Q_p(\zeta_3)^\times)^3$.
If $p \not= 3$ is a prime of good reduction for $E$, and $v_\pp(b)
\equiv 0 \mod{3}$ for all primes $\pp$ dividing $p$, then $p$ makes no
contribution to the sum~\eqref{ctpformula}.

Returning to our example, $E$ has minimal discriminant $-3^3 \cdot
13^3 \cdot 181$ and the norms of the $b_i$ were recorded
in~\eqref{b1b2}.  The Cassels-Tate pairing is therefore a sum of local
pairings at the primes $2,3,13$ and $181$.
Since $3$ is odd, there is no contribution from the infinite place.

\medskip

\paragraph{Contribution at $p=2$.} The local point $P = (4,2^2 + 2^6 +
O(2^8)) \in E(\Q_2)$ satisfies $\tan_S(P) \equiv a_1 \equiv a_2
\mod{(\Q_2(\zeta_3)^\times)^3}$.  Embedding $L_2$ in $\Q_2$ via
$\theta \mapsto 1+2^2+O(2^3)$ we find that $\tan_T(P) \equiv b_1
\equiv b_2 \equiv 1 \mod{(\Q_{2}^\times)^3}$. Therefore the local
pairing at $p=2$ is trivial.

\medskip

\paragraph{Contribution at $p=3$.} The local points
\begin{align*}
  P_1 & = (4,2 + 3 + 2.3^2 + O(3^5)) \in E(\Q_3) \\
  P_2 & = (3^{-2},3^{-3} + 1 + 3^2 + O(3^5)) \in E(\Q_3)
\end{align*}
satisfy $\tan_S(P_i) \equiv a_i \mod{(\Q_3(\zeta_3)^\times)^3}$ for
$i=1,2$.  Embedding $L_2$ in $\Q_3$ via $\theta \mapsto 1+2.3 + 3^3 +
O(3^4)$ we compute
\begin{align*}
  b_1/\tan_T(P_1) &\equiv 3 \mod{(\Q_{3}(\zeta_3)^\times)^3} \\
  b_2/\tan_T(P_2) &\equiv 6 \mod{(\Q_{3}(\zeta_3)^\times)^3}
\end{align*}
We recall from Section~\ref{computingLocalPairing} that
$\Q_3(\zeta_3)^\times/(\Q_3(\zeta_3)^\times)^3$ has basis
$\lambda,\eta_1,\eta_2,\eta_3$ where $\lambda = 1-\zeta_3$ and $\eta_i
= 1 - \lambda^i$. In terms of this basis we have
\begin{align*}
  3 &\equiv \lambda^2 \eta_1^2 \mod{ (\Q_3(\zeta_3)^\times)^3} &
  a_1 &\equiv \eta_1 \mod{ (\Q_3(\zeta_3)^\times)^3} \\
  6 &\equiv \lambda^2 \eta_1^2 \eta_2^2 \eta_3^2 \mod{
    (\Q_3(\zeta_3)^\times)^3} & a_2 &\equiv \eta_3^2 \mod{
    (\Q_3(\zeta_3)^\times)^3}
\end{align*}
Using~\eqref{table:pIs3Table} to compute the Hilbert norm residue
symbol, and not forgetting the factor $[\Q_3(\zeta_3):\Q_3] =2$
in~\eqref{ctpformula}, the local pairing at $p=3$ is as given
in~\eqref{localpairings-ex1}.

\medskip

\paragraph{Contribution at $p=13$.} 
We embed $L_1= \Q(\zeta_3)$ in $\Q_{13}$ via $\zeta_3 \mapsto 3 +
11.13 + O(13^2)$. The local points
\begin{align*}
P_1 & = (6,10 + 4.13 + 12.13^2 + O(13^3)) \in E(\Q_{13}) \\
P_2 & = (13,4.13 + 3.13^2 + 5.13^3 + O(13^4)) \in E(\Q_{13}) 
\end{align*}
satisfy $\tan_S(P_i) \equiv a_i \mod{(\Q_{13}^\times)^3}$ for $i=1,2$.
Embedding $L_2 = \Q(\theta)$ in $\Q_{13}$ via $\theta \mapsto 4 + 13 +
7.13^2 + O(13^3)$ we compute
\begin{align*}
b_1/\tan_T(P_1) &\equiv 1 \mod{(\Q_{13}^\times)^3} 
& a_1 &\equiv 2 \mod{(\Q_{13}^\times)^3} \\
b_2/\tan_T(P_2) &\equiv 2 \mod{(\Q_{13}^\times)^3}
& a_2 &\equiv 13^2 \mod{(\Q_{13}^\times)^3} 
\end{align*}
By Proposition~\ref{UsingCProp}(iv) we have $(2,13)_{13} = \zeta_3$.
The local pairing at $p=13$ is now given by the second matrix
in~\eqref{localpairings-ex1}.

\medskip

\paragraph{Contribution at $p=181$.} 
We embed $L_1= \Q(\zeta_3)$ in $\Q_{181}$ via $\zeta_3 \mapsto 48 +
O(181)$. We find that $a_1 \equiv a_2 \equiv 1
\mod{(\Q_{181}^\times)^3}$ and hence the local pairing at $p=181$ is
trivial.

\medskip

Adding together the local pairings at $p=3$ and $13$ gives the
(global) Cassels-Tate pairing on $S^{(\phihat)}(E'/\Q) = \langle a_1,
a_2 \rangle \subset L_1^\times / (L_1^\times)^3$.
\begin{align} \nonumber
&\text{ Local pairing at $p = 3$} &&  
\text{ Local pairing at $p = 13$} &&  
\text{ Global pairing} \\ & 
\label{localpairings-ex1}
\hspace{2em} 
\begin{array}{c|cc}
 & a_1 & a_2   \\
 \hline
 a_1  &0&1\\
 a_2  &2&1
\end{array} &&
\hspace{2em} 
\begin{array}{c|cc}
 & a_1 & a_2 \\ \hline 
 a_1  &0&0 \\
 a_2  &0&2
\end{array} &&
\hspace{1em} 
\begin{array}{c|cc}
 & a_1 & a_2 \\ \hline 
 a_1  &0&1 \\
 a_2  &2&0
\end{array}
\end{align}
Since the pairing is non-degenerate, it follows that $E(\Q)$ has rank
$0$.  Moreover the $3$-primary parts of $\Sha(E/\Q)$ and $\Sha(E'/\Q)$
are $0$ and $(\Z/3\Z)^2$. % respectively.
\end{Example}

\begin{Example}\label{SecondEx}
  Let $E$ and $E'$ be the $3$-isogenous elliptic curves labelled
  24060f1 and 24060f2 in Cremona's tables~\cite{cremdata}.
  \begin{align*}
    E: \quad y^2 &=x^3+(x+15)^2 \\
    E': \quad y^2 & =x^3-3\left(x+401/9\right)^2
  \end{align*}
  The Galois action on $E[3]$ is of type $\Z/3\Z$-nonsplit.  Indeed
  $E[3]$ is generated by
  \[ S=(0,15) \quad \text{ and } \quad T = (-90/(\theta + 2),-15
  \sqrt{-3} \theta/(\theta + 2)) \] where $\theta = \sqrt[3]{802}$.
  We set $\zeta_3 = (-1 + \sqrt{-3})/2$.  As in Section~\ref{sec:ZpZ}
  we have fields $L_1=\Q(\zeta_3)$, $L_2=\Q(\theta)$ and
  $M=\Q(\zeta_3,\theta)$. A descent by $3$-isogeny computes the Selmer
  groups
  \begin{align*}
  S^{(\phi)}(E/\Q) &= \{ 1 \} \subset (L_1^\times/(L_1^\times)^3)^{(1)} \\
  S^{(\phihat)}({E'}/\Q) &= \langle 2,3,5\rangle \subset \Q^\times/(\Q^\times)^3.
 \end{align*} 
 This gives an upper bound of $2$ for the rank of $E(\Q)$.  We seek to
 improve this bound by computing the Cassels-Tate pairing on
 $S^{(\phihat)}({E'}/\Q)$.

 We start by lifting $a_1 = 2$, $a_2 = 3$ and $a_3 = 5$ globally to
 $H^1(\Q,E[3]) \isom H$, where $H\subset K^\times/(K^\times)^3 \times
 M^\times/(M^\times)^3$ is given by Theorem~\ref{copycatTheorem}.  We
 used the existing function in Magma (again this example is too small
 for the methods of Section~\ref{sec3} to be needed) to solve the norm
 equations $N_{L_2/K}(\xi) = a_i$ for $i=1,2,3$, and then computed
 $b_i$ with $(a_i,b_i) \in H$ using Theorem~\ref{lemz3}.  Replacing
 $b_i$ by a small representative for its coset in $M^\times/
 (M^\times)^3$ we obtain
 \begin{align*}
   b_1 &= \tfrac{1}{3} (5 \zeta_3 + 5) \theta^2 + \tfrac{1}{3}
   (11 \zeta_3 - 4) \theta + \tfrac{1}{3}  (41 \zeta_3 + 290) \\
   b_2 &= \tfrac{5}{3} \zeta_3 \theta^2 - \tfrac{7}{3} \zeta_3
   \theta - \tfrac{1}{3}  (490 \zeta_3 + 213) \\
   b_3 &= \tfrac{1}{3} (7 \zeta_3 + 34) \theta^2 + \tfrac{1}{3} (66
   \zeta_3 + 317) \theta + \tfrac{1}{3} (308 \zeta_3 + 2991)
\end{align*}
It may be checked that these elements satisfy the conditions in
Theorem~\ref{copycatTheorem}.

The minimal discriminant of $E$ is $-2^4 \cdot 3^3 \cdot 5^3 \cdot
401$.  We find that $v_\pp(b_i) \equiv 0 \pmod{3}$ for all primes
${\pp}$ of $M$ not dividing 30.  The Cassels-Tate pairing is therefore
a sum of local pairings at the primes $2,3,5$ and $401$.
% Again, since $3$ is odd, we ignore the infinite place.

% The connecting map $E(\Q) \to H^1(\Q,E[3]) \isom H$ is given
% explicitly by the tangent lines at $S$ and $T$, i.e.  $P \mapsto
% (\tan_S(P),\tan_T(P))$ where
% \begin{align*}
%   \tan_S(x,y) &= y-x-15,\\
%   \tan_T (x,y) &= y + ((\theta - 1)/\sqrt{-3}) x 
%        - 15 \sqrt{-3} (\theta - 2)/(\theta + 2).
% \end{align*}

% We now compute the local pairings at $p=2,3,5,401$.

\medskip

\paragraph{Contribution at $p=2$.} 
We have $\tan_S(-S) = -30 \equiv 2 \mod{(\Q_2^\times)^3}$.  We compute
\begin{align*}
b_1/\tan_T(-S) &\equiv \zeta_3^2 \mod{(\Q_2(\zeta_3, \theta)^\times)^3} 
& a_1 &\equiv 2 \mod{(\Q_{2}^\times)^3} \\
b_2 &\equiv 1 \mod{(\Q_2(\zeta_3, \theta)^\times)^3} 
& a_2 &\equiv 1 \mod{(\Q_{2}^\times)^3} \\ 
b_3 &\equiv 1 \mod{(\Q_2(\zeta_3, \theta)^\times)^3}
& a_3 &\equiv 1 \mod{(\Q_{2}^\times)^3} 
\end{align*}
The local pairing at 2 is therefore given by the first matrix
in~\eqref{localpairings-ex2}.

\medskip

\paragraph{Contribution at $p=3$.} Let $P = (-5/2,2.3 + 2.3^2 + 3^3 +
O(3^8)) \in E(\Q_3)$. Then $\tan_S(P) \equiv 2 \equiv 5^{-1}
\mod{(\Q_3^\times)^3}$ and $\tan_S(-S) \equiv 3
\mod{(\Q_3^\times)^3}$.  We embed $L_2$ in $\Q_3$ via $\theta \mapsto
1 + 2.3 + 2.3^2 + O(3^4)$.  We recall that
$\Q_3(\zeta_3)^\times/(\Q_3(\zeta_3)^\times)^3$ has basis
$\lambda,\eta_1,\eta_2,\eta_3$ where $\lambda = 1-\zeta_3$ and $\eta_i
= 1 - \lambda^i$. We compute
\begin{align*}
  b_1/\tan_T(P) &\equiv \eta_1^2 \eta_3
  \mod{(\Q_3(\zeta_3)^\times)^3}
  & a_1 &\equiv \eta_2^2 \eta_3^2 \mod{(\Q_3(\zeta_3)^\times)^3} \\
  b_2/\tan_T(-S) &\equiv \eta_1^2
  \mod{(\Q_3(\zeta_3)^\times)^3}
  & a_2 &\equiv \lambda^2 \eta_1^2 \mod{(\Q_3(\zeta_3)^\times)^3}  \\
  b_3/\tan_T(-P) &\equiv \eta_1 \eta_3^2
  \mod{(\Q_3(\zeta_3)^\times)^3} & a_3 &\equiv \eta_2 \eta_3
  \mod{(\Q_3(\zeta_3)^\times)^3}
\end{align*}
Using~\eqref{table:pIs3Table} to compute the Hilbert norm residue
symbol, the local pairing at 3 is given by the second matrix
in~\eqref{localpairings-ex2}.

\medskip

\paragraph{Contribution at $p=5$.} 
We have $\tan_S(-S) = -30 \equiv 5 \mod{(\Q_5^\times)^3}$.  We embed
$L_2$ in $\Q_5$ via $\theta \mapsto 3 + 3.5^2 + 5^4 + O(5^5)$ and
compute
\begin{align*}
  b_1 &\equiv \zeta_3 \mod{(\Q_5(\zeta_3)^\times)^3}
  & a_1 &\equiv 1 \mod{(\Q_{5}^\times)^3} \\
  b_2 &\equiv 1 \mod{(\Q_5(\zeta_3)^\times)^3}
  & a_2 &\equiv 1 \mod{(\Q_{5}^\times)^3} \\
  b_3/\tan_T(-S) &\equiv \zeta_3
  \mod{(\Q_5(\zeta_3)^\times)^3} & a_3 &\equiv 5
  \mod{(\Q_{5}^\times)^3}
\end{align*}
The local pairing at 5 is therefore given by the third matrix
in~\eqref{localpairings-ex2}.

\medskip

\paragraph{Contribution at $p=401$.} Since $401 \equiv 2 \pmod{3}$ we
have $2,3,5 \in (\Q_{401}^\times)^3$, and so the local pairing at
$p=401$ is trivial.

\medskip

Adding together the local pairings at $p= 2$, $3$ and $5$ gives the
(global) Cassels-Tate pairing on $S^{(\phihat)}(E'/\Q) = \langle 2, 3,
5 \rangle \subset \Q^\times / (\Q^\times)^3$.
\begin{align} \nonumber  & 
%\begin{array}{c} \text{ Local pairing } & \text{ at $p = 2$ } \end{array} &&  
%\begin{array}{c} \text{ Local pairing } & \text{ at $p = 3$ } \end{array} &&  
%\begin{array}{c} \text{ Local pairing } & \text{ at $p = 5$ } \end{array} && 
\hspace{3em} p = 2 &&
\hspace{1em} p = 3 &&
\hspace{1em} p = 5 && 
% \text{ Local pairing at $p = 3$} &&  
% \text{ Local pairing at $p = 5$} &&  
\hspace{-0.5em}
\text{ Global pairing} \\ & 
\label{localpairings-ex2}
\hspace{2em} 
\begin{array}{c|ccc}
 & 2 & 3 & 5 \\  \hline
 2 &1&0&0\\
 3 &0&0&0\\
 5 &0&0&0
\end{array} &&
\begin{array}{c|cccc}
& 2 & 3 & 5 \\
 \hline
2 &2&1&1\\
3 &2&0&1\\
5 &1&2&2
\end{array} &&
\begin{array}{c|ccc}
 & 2 & 3 & 5  \\
 \hline
 2 &0&0&1\\
 3 &0&0&0\\
 5 &0&0&1
\end{array} &&
\begin{array}{c|ccc}
 & 2 & 3 & 5 \\
 \hline
 2 &0&1&2\\
 3 &2&0&1\\
 5 &1&2&0
\end{array}
\end{align}
This again shows that $\rank E(\Q)=0$, and the $3$-primary parts of
$\Sha(E/\Q)$ and $\Sha(E'/\Q)$ are $0$ and
$(\Z/3\Z)^2$. % respectively.
\end{Example}

As described in the introduction, Eroshkin found five examples of
elliptic curves $E/\Q$ with torsion subgroup $\Z/3\Z$ and rank at
least $13$. We now consider the first of these examples.  The other
examples are similar, and are treated in detail in \cite[Section
6.1]{phdMonique}.

\begin{Example}
\label{ThirdEx}
Let $E/\Q$ be the elliptic curve $y^2 + A_1 xy + A_3 y = x^3$ where
$A_1 = 10154960719$ and $A_3 = -66798078951809458114391930400$.  The
primes of bad reduction for $E$ are those appearing in the following
prime factorisations.  \small
  \begin{align*} A_3 = -2^5 \cdot 3^3 \cdot 5^2 \cdot 7^2 \cdot 11
    \cdot 13 \cdot 17 \cdot 19 \cdot 23 \cdot 29 \cdot 31 \cdot 37
    \cdot 41 \cdot 4&3
    \cdot 47 \cdot 53 \cdot 59 \cdot 61 \cdot 113, \\
    A_1^3 - 27 A_3 = 197 \cdot 317 \cdot 3313949 \cdot 2831657657
    \cdot 4&864617187.
  \end{align*}
  \normalsize

% Let $P_1, \ldots, P_{13}$ be the 
%   The 
% known independent points of infinite order in $E(\Q)$,
% as listed at \cite{dujweb}.
%  are:
%   \begin{align*}
%     P_{1} &= ( 22385162997659061600 , 52625844692710748830154784000 ), \\
%     P_{2} &= ( 34121356867937069640 , 103633234297022468496697260840 ), \\
%     P_{3} &= ( 8923442541064091040 , 17286189547043616229977186240 ), \\
%     P_{4} &= ( 5813934277239629100 , 18424262870234651453351233500 ), \\
%     P_{5} &= ( 18430587542804127600 , 39228780433391713968199578000 ), \\
%     P_{6} &= ( 6629770994718537120 , 16809101795665860615489347520 ), \\
%     P_{7} &= ( -28219028077355824800 , 270194450725400678854982841600 ), \\
%     P_{8} &= ( 10205673044535503990 , 19026943580934583948460342900 ), \\
%     P_{9} &= ( -26941722141580941240 , 267203106074722424190858106560 ), \\
%     P_{10} &= ( 7410830460204436650 , 16383622996035063617949015300 ), \\
%     P_{11} &= ( -35828808535202217150 , 234522026020484344865490594750 ), \\
%     P_{12} &= ( 4732967352735606600 , 23287727366639341635234744000 ), \\
%     P_{13} &= ( 18475849916970533100 , 39370225909289341263266862000 ).
% \end{align*} 

  The Galois action on $E[3]$ is of type $\Z/3\Z$-nonsplit.  Indeed
  $E[3]$ is generated by $S=(0,0)$ and $T = (3 A_3/(\theta - A_1),A_3
  (\zeta_3 \theta - A_1)/(\theta - A_1))$ where $\theta =
  \sqrt[3]{A_1^3 - 27 A_3}$. Let $\phi : E \to E'$ be the $3$-isogeny
  with kernel generated by $S$. A descent by $3$-isogeny
  \cite[Proposition 1.2]{ctpps} shows that
  \[ S^{(\phihat)}({E'}/\Q) = \left\{ x \in \Q^\times/(\Q^\times)^3
    \Bigg| \begin{array}{cl} v_p(x) \equiv 0 \!\!\!
      \pmod{3} & \text{ for all } p \nmid A_3 \\
      x \in (\Q_p^\times)^3 & \text{ for all } p \mid (A_1^3 - 27 A_3)
    \end{array} \right\}. \]
  Noting that only one % the largest
  of the prime factors of $A_1^3 - 27 A_3$ is
  congruent to $1$ mod~$3$, we find that $S^{(\phihat)}({E'}/\Q)$
  % there is only one linear condition. Thus $S^{(\phihat)}({E'}/\Q)$
  is the $18$-dimensional $\F_3$-vector space with basis
\begin{equation}
\label{selbasis}
\begin{aligned}
  2, \,\, 5, \,\, 11, \,\, 17, \,\, & \, 31, \,\, 47, \,\, 53, \,\,
  3^2 \cdot 7, \,\, 3 \cdot 13, \,\, 3 \cdot 19, \,\, 3 \cdot 23, \,\,
  \\ & 3 \cdot 29, \,\, 3 \cdot 37, \,\, 3 \cdot 41, \,\, 3 \cdot 43,
  \,\, 3 \cdot 59, \,\, 3^2 \cdot 61, \,\, 3 \cdot 113.
\end{aligned}
\end{equation}
By the analogue (for $n=3$) of \cite[Theorem 1]{5descent}, or by
Cassels' formula~\cite{bsd}, it follows that $S^{(\phi)}(E/\Q)$ is
trivial. This gives an upper bound of $17$ for the rank of $E(\Q)$.
We improve this bound by computing the Cassels-Tate pairing on the
subspace of $S^{(\phihat)}({E'}/\Q)$ generated by the first $5$ basis
elements in~\eqref{selbasis}, say $a_1, \ldots, a_5$.

As in Section~\ref{sec:ZpZ} we have fields $L_1=\Q(\zeta_3)$,
$L_2=\Q(\theta)$ and $M=\Q(\zeta_3,\theta)$.  In Example~\ref{NormEx2}
we solved one of the norm equations $N_{L_2/\Q}(\xi)=a_i$. The other
cases are similar.
% We used the method of Section~\ref{sec3} to solve the norm equations
% $N_{L_2/\Q}(\xi) = a_i$. For example, the equation
% $N_{L_2/\Q}(\xi)=17$ was solved in Example~\ref{NormEx2}.
We then used Theorem~\ref{lemz3} to compute $b_i \in M$ with
$(a_i,b_i) \in H$, where $H$ is as defined in
Theorem~\ref{copycatTheorem}.  So that they could sensibly be recorded
in the paper, we went to some effort to simplify the $b_i$, both by
multiplying by elements of $(L_1^\times/(L_1^\times)^3)^{(1)}$ and by
finding small representatives modulo cubes.  \small
 \begin{align*}
  b_1 &=   80506656009 \theta^2 - 1176048899716052084841 \theta 
    \\ & - 14935178208744640295856847246416 \zeta_3 - 
            15036024242599209733354645439703, \\
  b_2 &=   14726363049 \theta^2 - 79874874765966026529 \theta 
    \\ & + 8657187467761497385350294134040 \zeta_3 - 
            8434480171840925245748610923511, \\
  b_3 &=   218823372684 \theta^2 - 4630953487853681932716 \theta 
    \\ & + 34676125489353056066296086569091 \zeta_3 + 
        60807466313987014328526766460838, \\
  b_4 &=   286372386666 \theta^2 - 1448511948608043607524 \theta
    \\ & - 57528276376283017594756117712901 \zeta_3 - 
        38980928584242432627609103951923, \\
  b_5 &=   332611290882 \theta^2 + 1168159925437207764516 \theta 
    \\ & - 67751649380200776098612752578639 \zeta_3 + 
        71449768157279254278949836738165
\end{align*}
\normalsize 
We find that $v_\pp(b_i) \equiv 0 \pmod{3}$ for all primes $\pp$ of
$M$ not dividing $a_i$. Therefore only the bad primes for $E$
contribute to the Cassels-Tate pairing.  The local conditions used to
compute $S^{(\phihat)}(E'/\Q)$ show that its elements are locally
trivial at the primes dividing $A_1^3 - 27 A_3$. So we only need to
compute the local pairings at the primes dividing $A_3$.

Let $P_1, \ldots, P_{13}$ be the known independent points of infinite
order in $E(\Q)$, as listed on Dujella's website~\cite{dujweb}.

For the primes $p$ with $p \equiv 1 \mod{3}$ we find that
$\tan_S(P_j)$ generates $\Z_p^\times/(\Z_p^\times)^3$ where $j =
6,4,1,1,2,2,7$ for $p = 7,13,19,31,37,43,61$. Moreover $\tan_T(P_j)$
is a unit mod cubes at the primes dividing $p$.  So we only need to
consider the primes that additionally divide one of the $a_i$. The
only such prime is $31$. Embedding $M$ in $\Q_{31}$ via $\zeta_3
\mapsto 5 + 14.31 + O(31^2)$ and $\theta \mapsto 1 - 2.31^2 +
O(31^4)$, we compute
\begin{align*}
  b_1 &\equiv 5^2 \mod{(\Q_{31}^\times)^3}
  & a_1 &\equiv 1 \mod{(\Q_{31}^\times)^3} \\
  b_2/\tan_T(-P_1) &\equiv 1 \mod{(\Q_{31}^\times)^3}
  & a_2 &\equiv 5 \mod{(\Q_{31}^\times)^3} \\
  b_3/\tan_T(-P_1) &\equiv 1 \mod{(\Q_{31}^\times)^3}
  & a_3 &\equiv 5 \mod{(\Q_{31}^\times)^3} \\
  b_4/\tan_T(P_1) &\equiv 1 \mod{(\Q_{31}^\times)^3}
  & a_4 &\equiv 5^2 \mod{(\Q_{31}^\times)^3} \\
  b_5/\tan_T(P_6) &\equiv 31^2 \mod{(\Q_{31}^\times)^3} & a_5 &\equiv
  31 \mod{(\Q_{31}^\times)^3}
\end{align*}
This gives the local pairing at $p=31$ as recorded below.

For the primes $p$ with $p \equiv 2 \mod{3}$ the group
$\Z_p^\times/(\Z_p^\times)^3$ is trivial. So we only need to consider
those primes $p$ that additionally divide one of $a_1, \ldots, a_5$.
We find that $\tan_S(P_j) \equiv p \mod (\Q_p^\times)^3$ where
$j=5,2,1,5$ for $p=2,5,11,17$. The unique embedding of $L_2$ in $\Q_p$
determines an embedding of $M$ in $\Q_p(\zeta_3)$.  Then
$b_i/\tan_T(P_j)^{v_p(a_i)}$ takes the following values mod
$(\Q_p(\zeta_3)^\times)^3$.
\[ \begin{array}{c|cccc} 
     &  p = 2     & p = 5 & p = 11 & p = 17 \\ \hline
i= 1 &  1         &   1   &\zeta_3^2& (\zeta_3 + 3)^2 \\
i= 2 &  1         &   1   &   1   & 1 \\
i= 3 & \zeta_3  &   1   &   1   &  (\zeta_3 + 3)^2 \\
i= 4 & \zeta_3^2    &   1   &\zeta_3^2 & 1 \\
i= 5 & \zeta_3^2    &\zeta_3^2&\zeta_3&  \zeta_3 + 3 
\end{array} \]
This gives the local pairings at $p=2,5,11,17$ as recorded below.

Finally, when $p=3$, we find that $\tan_S(P_8) \equiv 2 \mod
(\Q_3^\times)^3$, whereas the elements $b_1, \ldots, b_5$ and
$\tan_T(P_8)$ all belong to the subgroup of
$\Q_3(\zeta_3)^\times/(\Q_3(\zeta_3)^\times)^3$ generated by $\eta_3 =
1 - (1-\zeta_3)^3$. The local pairing at $p=3$ is therefore trivial.

Adding together the local pairings gives the (global) Cassels-Tate
pairing on the $5$-dimensional subspace $\langle 2,5,11,17,31 \rangle$
of $S^{(\phihat)}(E'/\Q) \subset \Q^\times / (\Q^\times)^3$.  The fact
we obtain an alternating matrix provides some check on our
calculations.
\begin{equation*}
\begin{array}{c@{\quad}c@{\quad}c}
  \hspace{-2em} \text{ Local pairing at $p = 2$} &  
  \text{ Local pairing at $p = 5$} &  
  \text{ Local pairing at $p = 11$} \hspace{-2em} \\
  \begin{array}{c|ccccc}
    & 2 & 5 & 11 & 17 & 31 \\  \hline
    2 &0&0&0&0&0\\
    5 &0&0&0&0&0\\
   11 &2&0&0&0&0\\
   17 &1&0&0&0&0\\
   31 &1&0&0&0&0
  \end{array} &
  \begin{array}{c|cccccc}
    & 2 & 5 & 11 & 17 & 31 \\  \hline
    2 &0&0&0&0&0\\
    5 &0&0&0&0&0\\
   11 &0&0&0&0&0\\
   17 &0&0&0&0&0\\
   31 &0&2&0&0&0
  \end{array} &
  \begin{array}{c|ccccc}
    & 2 & 5 & 11 & 17 & 31 \\  \hline
    2 &0&0&1&0&0\\
    5 &0&0&0&0&0\\
   11 &0&0&0&0&0\\
   17 &0&0&1&0&0\\
   31 &0&0&2&0&0
 \end{array}
 \end{array}
\end{equation*}
\begin{equation*}
\begin{array}{c@{\quad}c@{\quad}c}
\hspace{-2em} \text{ Local pairing at $p = 17$} &  
\text{ Local pairing at $p = 31$} &  
\text{ Global pairing } \hspace{-2em} \\
\begin{array}{c|ccccc}
  & 2 & 5 & 11 & 17 & 31 \\  \hline
  2 &0&0&0&2&0\\
  5 &0&0&0&0&0\\
 11 &0&0&0&2&0\\
 17 &0&0&0&0&0\\
 31 &0&0&0&1&0
\end{array} &
\begin{array}{c|cccccc}
  & 2 & 5 & 11 & 17 & 31 \\  \hline
  2 &0&0&0&0&2\\
  5 &0&0&0&0&0\\
 11 &0&0&0&0&0\\
 17 &0&0&0&0&0\\
 31 &0&1&1&2&0
\end{array} &
\begin{array}{c|ccccc}
  & 2 & 5 & 11 & 17 & 31 \\  \hline
  2 &0&0&1&2&2\\
  5 &0&0&0&0&0\\
 11 &2&0&0&2&0\\
 17 &1&0&1&0&0\\
 31 &1&0&0&0&0
 \end{array}
 \end{array}
\end{equation*}
Since the Cassels-Tate pairing on this $5$-dimensional subspace of
$S^{(\phihat)}(E'/\Q)$ has rank $4$, it follows that $\rank E(\Q) =
13$.  Moreover the $3$-primary parts of $\Sha(E/\Q)$ and $\Sha(E'/\Q)$
are $0$ and $(\Z/3\Z)^4$.  The $18 \times 18$ matrix (still of rank~4)
giving the Cassels-Tate pairing on all of $S^{(\phihat)}(E'/\Q)$ is
recorded in \cite[Example 6.1.2]{phdMonique}.
\end{Example}

\bibliographystyle{alpha}
\bibliography{references}{}

\end{document}